\newtheorem{theorem}{Theorem}[section]
\newtheorem{lemma}[theorem]{Lemma}
\newtheorem{prop}[theorem]{Proposition}
\newtheorem{defn}[theorem]{Definition}
\newtheorem{remark}[theorem]{Remark}
\numberwithin{equation}{section}
\newcommand{\Z}{\mathbb{Z}}
\newcommand{\R}{\mathbb{R}}
\newcommand{\C}{\mathbb{C}}
\newcommand{\bS}{\mathbb{S}}
\newcommand{\CF}{\mathcal{F}}
\newcommand{\bL}{\mathbf{L}}
\newcommand{\BL}{\mathbb{L}}
\newcommand{\Hom}{\mathrm{Hom}}
\newcommand{\Fuk}{\mathrm{Fuk}}
\newcommand{\one}{\mathbf{1}}
\newcommand{\bP}{\mathbb{P}}
\newcommand{\cO}{\mathcal{O}}
\newcommand{\pt}{\mathrm{pt}}
\newcommand{\MF}{\mathrm{MF}}
\newcommand{\Coh}{\mathrm{Coh}}
\begin{document}

\title[Moduli of Lagrangians]{Moduli of Lagrangian immersions with formal deformations}

\author[Hong]{Hansol Hong}
\address{Center of Mathematical Sciences and Applications\\Harvard University}
\email{hhong@cmsa.fas.harvard.edu, hansol84@gmail.com}
\author[Lau]{Siu-Cheong Lau}
\address{Department of Mathematics and Statistics\\ Boston University}
\email{lau@math.bu.edu}
 
\begin{abstract}
We introduce a joint project with Cheol-Hyun Cho on the construction of quantum-corrected moduli of Lagrangian immersions.  The construction has important applications to mirror symmetry for pair-of-pants decompositions, SYZ and wall-crossing.  The key ingredient is Floer-theoretical gluing between local moduli spaces of Lagrangians with different topologies.

\end{abstract}

\maketitle

\section{Introduction}
Moduli theory for vector bundles has been very well established.  The groundbreaking discovery of Donaldson \cite{Don} and Uhlenbeck-Yau \cite{UY} built a deep connection between stability and canonical metric.

\begin{theorem}[\cite{Don,UY}]
A slope-semistable holomorphic vector bundle admits a Hermitian Yang-Mills metric.
\end{theorem}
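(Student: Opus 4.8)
The plan is to produce the Hermitian Yang--Mills metric $h$, i.e. a solution of $\sqrt{-1}\,\Lambda_\omega F_h = \lambda\cdot\mathrm{Id}_E$ with $\lambda$ the topological constant fixed by the slope $\mu(E)=\deg_\omega(E)/\rank(E)$ and the normalization of $\omega$, by the continuity method of Uhlenbeck--Yau. I will run the argument for a \emph{stable} bundle; the polystable case follows by applying it to each stable summand, and a general semistable $E$ reduces to the polystable graded object of a Jordan--Hölder filtration, whose Hermitian--Einstein metric induces one on $E$ in the appropriate (block-diagonal, possibly degenerate) sense. Fix a background Hermitian metric $h_0$ and write a competitor as $h = h_0 e^s$ with $s$ an $h_0$-self-adjoint endomorphism; the Hermitian Yang--Mills operator becomes a quasilinear elliptic operator $L(s) := \sqrt{-1}\,\Lambda_\omega F_{h_0 e^s} - \lambda\,\mathrm{Id}$ in $s$.

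First I would introduce the perturbed family $L(s_\varepsilon) + \varepsilon s_\varepsilon = 0$ for $\varepsilon\in(0,1]$. Openness in $\varepsilon$ is immediate from the implicit function theorem on Hölder spaces: the linearization of $s\mapsto L(s)+\varepsilon s$ is, modulo lower-order and conjugation terms, of the form $\Delta_{h_0} + \varepsilon + (\text{zeroth order $\geq 0$})$, hence invertible as soon as $\varepsilon>0$. Nonemptiness/closedness for $\varepsilon$ bounded away from $0$ is routine once the $C^0$ bound below is available — one may either start the continuity at $\varepsilon=1$ by directly minimizing the associated Donaldson-type functional (the $\varepsilon s$ term makes it coercive) or run the method from a large parameter; either way elliptic bootstrapping turns the $C^0$ control into full regularity and limits.

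The heart of the argument is the a priori estimate that $\sup_X|s_\varepsilon|$ stays bounded as $\varepsilon\to 0$. If not, normalize $u_\varepsilon = s_\varepsilon/\|s_\varepsilon\|_{C^0}$; a Moser/Weitzenböck argument on the equation satisfied by $u_\varepsilon$, combined with an integral identity obtained by pairing against $u_\varepsilon$ and exploiting the convexity of the nonlinearity, yields a uniform $L^2_1$ bound and a weak limit $u_\infty\neq 0$ that is $h_0$-self-adjoint with (a.e.) constant eigenvalues, whose eigenvalue-projections $\pi$ satisfy the \emph{weakly holomorphic subbundle} condition $(\mathrm{Id}-\pi)\,\bar\partial\pi = 0$ in $L^2$. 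Invoking the Uhlenbeck--Yau regularity theorem, each such $\pi$ is orthogonal projection onto a coherent analytic subsheaf $\mathcal F\subset E$; a Chern--Weil computation for the singular projection — legitimate because the relevant curvature integrals converge by the $L^2_1$ bound — gives $\mu(\mathcal F)\geq\mu(E)$, with the trivial alternatives $\mathcal F=0$ or $\mathcal F=E$ excluded by the eigenvalue data of $u_\infty$. This contradicts stability, so the $C^0$ bound holds; then $C^0\Rightarrow C^{2,\alpha}\Rightarrow C^\infty$ (standard for this quasilinear system) gives uniform higher-order bounds, a subsequence $s_\varepsilon\to s_\infty$ converges, and $h_0 e^{s_\infty}$ solves the Hermitian Yang--Mills equation.

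The main obstacle is exactly the Uhlenbeck--Yau regularity step: promoting a merely $L^2_1$ weakly holomorphic subbundle to an honest coherent subsheaf, smooth off an analytic set of complex codimension $\geq 2$, and checking that the slope inequality survives those singularities. An analytically equivalent but structurally cleaner alternative is to replace the continuity method by the Donaldson heat flow $h^{-1}\partial_t h = -(\sqrt{-1}\,\Lambda_\omega F_h - \lambda\,\mathrm{Id})$, whose long-time existence is parabolic theory and whose convergence is governed by boundedness below of the Donaldson functional — failure of which again produces a destabilizing subsheaf, so the same hard core must be confronted either way.
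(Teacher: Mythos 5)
The paper offers no proof of this statement at all: it is quoted in the introduction of an expository article as the classical Donaldson--Uhlenbeck--Yau theorem, with the reader referred to \cite{Don,UY}. So the only meaningful comparison is between your sketch and the standard published proofs --- and your sketch \emph{is} the standard Uhlenbeck--Yau argument: the perturbed continuity family $L(s_\varepsilon)+\varepsilon s_\varepsilon=0$, openness by the implicit function theorem, the a priori $C^0$ bound as the crux, and, when it fails, the extraction of a weakly holomorphic projection $\pi$ with $(\mathrm{Id}-\pi)\bar\partial\pi=0$ that regularizes to a destabilizing coherent subsheaf. You also correctly identify the regularity theorem for weakly holomorphic subbundles as the genuinely hard analytic core, and the heat-flow alternative is a fair description of Donaldson's route. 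As a strategy outline this is right.

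There is, however, one genuine gap, and it sits exactly where the paper's own phrasing is loose. The theorem as literally stated --- ``slope-\emph{semistable} implies Hermitian Yang--Mills'' --- is false: a strictly semistable bundle that is not polystable (e.g.\ a nonsplit extension of $\mathcal{O}$ by $\mathcal{O}$ on an elliptic curve) admits no Hermitian--Einstein metric, only \emph{approximate} ones in the sense of Kobayashi. The correct hypothesis is polystability. Your attempted reduction --- pass to the graded object of a Jordan--H\"older filtration and transport its metric back to $E$ ``in the appropriate (block-diagonal, possibly degenerate) sense'' --- does not close this gap, because the graded object is not isomorphic to $E$ unless $E$ is already polystable, and no honest metric on $E$ solving $\sqrt{-1}\,\Lambda_\omega F_h=\lambda\,\mathrm{Id}$ is produced. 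Concretely: if such a metric existed, the kernel decomposition of parallel sections would split the Jordan--H\"older filtration, contradicting nonsplitness. So either the statement should be read with ``polystable'' in place of ``semistable'' (in which case your argument for the stable case plus the direct-sum observation suffices, modulo the cited hard analysis), or the conclusion must be weakened to approximate Hermitian--Einstein metrics, which requires a different (though related) argument. You should make that choice explicit rather than hiding it in the word ``degenerate.''
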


Stability conditions and GIT were essential to the construction.  In general terms, Bridgeland \cite{Bridgeland} developed a general mathematical theory of stability conditions for triangulated categories based on Douglas' work \cite{Douglas} on $\Pi$-stability on D-branes in string theory.
Foundational techniques have been developed in \cite{Bridgeland-K3,Toda1,BMT} to construct stability conditions for derived categories of coherent sheaves.  One remarkable feature of the theory is that, if we vary our choice of a stability condition, then the moduli spaces of stable objects undergo birational changes \cite{Bridgeland-flop}, and they are related by Fourier-Mukai transforms.

We are working to develop a moduli theory for Lagrangians.  According to homological mirror symmetry \cite{Kont-HMS}, coherent sheaves on a complex manifold are mirror to Lagrangian submanifolds in its mirror symplectic manifold.  Thus there should be a mirror moduli theory for Lagrangian submanifolds.  

The following lists some ingredients that are new to the moduli theory for coherent sheaves.
\begin{enumerate}
\item {\bf Complexification.}  The classical moduli spaces are affine manifolds with singularities \cite{Hitchin,McLean}.  To get better compactifications,
we need to complexify the moduli spaces by considering flat connections or other boundary deformations on the Lagrangians.  Technically we need to work over the Novikov ring \cite{FOOO,FOOO-T}
\begin{align*}
\Lambda_+ =& \left\{\sum_{i=0}^\infty a_i T^{A_i} \mid A_i > 0 \textrm{ increase to } +\infty, a_i \in \C \right\},\\
\Lambda_0 = &\left\{\sum_{i=0}^\infty a_i T^{A_i} \mid A_i \geq 0 \textrm{ increase to } +\infty, a_i \in \C \right\},\\
\Lambda = &\left\{\sum_{i=0}^\infty a_i T^{A_i} \mid A_i \textrm{ increase to } +\infty, a_i \in \C \right\},\\
\Lambda_0^\times =& \, \C^\times \oplus \Lambda_+. \\
\end{align*}
They are equipped with the valuation function 
$$\mathrm{val}:\sum_{i=0}^\infty a_i T^{A_i} \mapsto A_0 \textrm{ and } \mathrm{val}(0) = +\infty.$$
The moduli are rigid analytic spaces.
\item {\bf Quantum corrections.}  The canonical complex structures need to be corrected using Lagrangian Floer theory \cite{FOOO}.  For moduli of SYZ fibers, the combinatorial structure of quantum corrections was deeply studied by Kontsevich-Soibelman \cite{KS-affine} and Gross-Siebert \cite{GS07}.
\item {\bf Landau-Ginzburg models.}  After compactification, the moduli are usually singular algebraic varieties.  
They can be described as critical loci of holomorphic functions which are known as Landau-Ginzburg models.  Orlov \cite{Orlov} found the theory of matrix factorizations for Landau-Ginzburg models.  More generally we may encounter noncommutative Landau-Ginzburg models \cite{Bocklandt, CHL2}.
\item {\bf Singular Lagrangians.}  Geometrically singular Lagrangians (or even isotropic skeletons) can appear as limits of smooth Lagrangians.  They are still not yet very well understood.  Kontsevich \cite{Kont-SG} proposed to study them using cosheaves of categories.  Nadler \cite{Nadler} is developing a study of arboreal singularities to classify them.  We need to understand what objects these singular Lagrangians correspond to in the Fukaya category.
\end{enumerate}

A particularly important class of examples is the moduli of fibers in a Lagrangian torus fibration.  Strominger-Yau-Zaslow \cite{SYZ} conjectured that torus duality produces mirror geometries.  Family Floer theory is the key ingredient to derive homological mirror symmetry in the SYZ setting.
The theory was proposed by Fukaya \cite{Fukaya-famFl} to study mirror symmetry by using Floer cohomologies of fibers of a Lagrangian torus fibration.

Tu \cite{Tu-reconstruction} took this approach to construct mirror spaces away from singular fibers.  Abouzaid \cite{Ab-famFl2} constructed family Floer functors for torus bundles and showed that the functor is fully faithful.  The famous Fukaya trick plays an important role, which relates Floer theories of different fibers by isotopies of $A_\infty$-algebras.  However singular Lagrangian fibers place a major difficulty to understand SYZ and family Floer theory.

Lagrangian immersions are the best singular Lagrangians in the sense that they still have a well-defined Floer theory by Akaho-Joyce \cite{AJ}.  In \cite{CHL,CHKL,CHL2}, we constructed mirror geometries as the local moduli of Lagrangian immersions and derived homological mirror symmetry.

In the study of SYZ, the immersed two-sphere in (a) of Figure \ref{fig:mirror_glue} is particularly interesting, since it (or the product with a torus) is the main source of wall-crossing phenomenons.  

Furthermore, it is hoped that any object in the derived Fukaya category can be given as a Lagrangian immersion; in particular any singular Lagrangian is isomorphic (as an object in the Fukaya category) to an immersion, so that we do not need to tackle singular Lagrangians other than immersions at all!  For instance, Lagrangian immersions in the pair-of-pants were used by Seidel \cite{Seidel-g2} and Sheridan \cite{Sheridan11,Sheridan-CY} to prove homological mirror symmetry for Fermat-type hypersurfaces (see (b) of Figure \ref{fig:mirror_glue} for such an immersion in dimension 1).  

\begin{figure}[htb!]
    \includegraphics[scale=0.5]{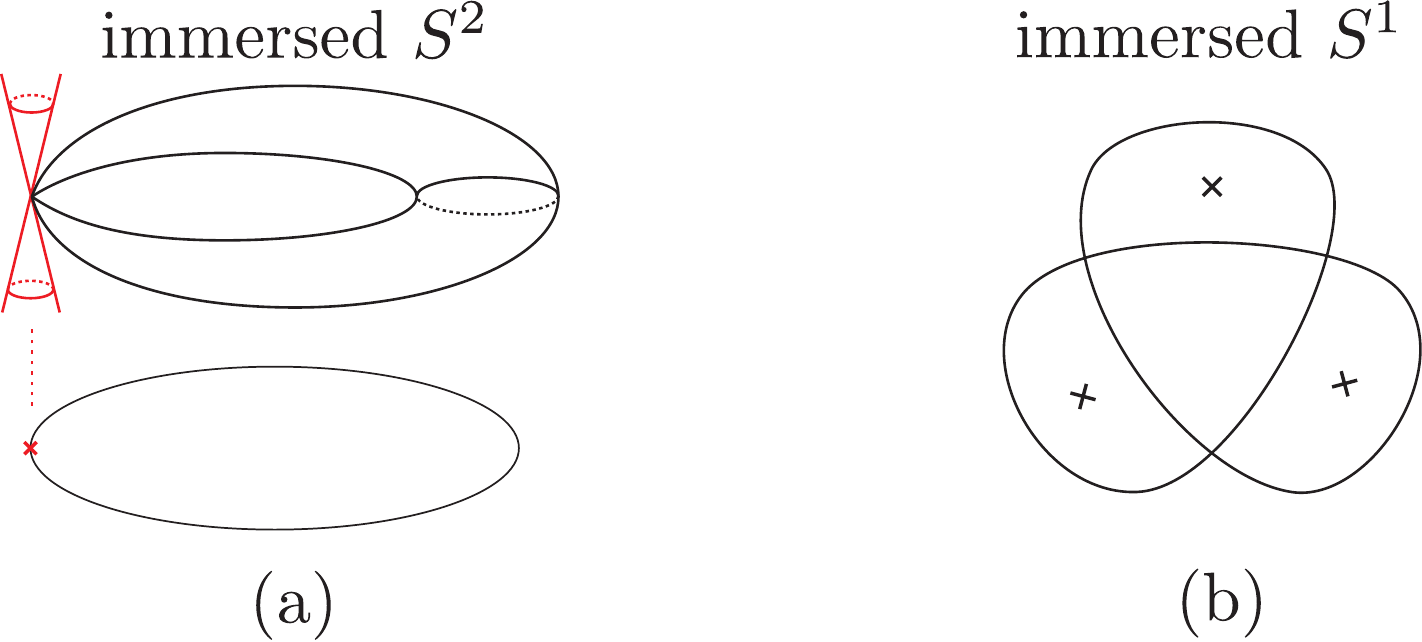}
    \caption{Immersed spheres}
		\label{fig:mirror_glue}
\end{figure}

In this expository article, we introduce ongoing joint works with Cheol-Hyun Cho, in which we use Floer-theoretical gluing methods to construct  moduli of Lagrangian immersions which are not necessarily tori.  There are many occasions that non-tori play an important role, for instance in pair-of-pants decompositions or geometries related to knots.  An interesting feature is that topologies of Lagrangians can jump within the same moduli space.  We cannot use the Fukaya trick because the Lagrangians are no longer diffeomorphic to each other.  

There is a technical advantage of having Lagrangian immersions in our moduli.  Namely, immersed sectors provide bigger formal deformation spaces than flat connections.  The deformation parameters associated to immersed sectors lie in $\Lambda_+$ (or $\Lambda_0$ in good cases), while flat connections are only $\C^\times$-valued.  (Lagrangian Floer theory does not allow deformations by $\Lambda^\times$-connections since it would destroy the energy filtration.  We will consider them as `pseudo-deformations'.)  Intuitively $\Lambda_+$ consists of infinitely many layers of $\C^\times$ in different energy levels.  Thus the formal deformation space of an immersion typically covers a local family of Lagrangians (which can even be singular).  

The main idea is to consider \emph{pseudo-isomorphisms between Lagrangian immersions in the moduli and obtain gluing information from cocycle conditions on pseudo-isomorphisms}.  This is explained in Section \ref{sec:gluing}.  We introduce two applications, namely mirror construction by pair-of-pants decompositions, and generic wall-crossing in SYZ.  

We will present the main ideas and leave the details in separate papers.  To begin with, let's revisit the most basic example $\bP^1$, which already contains some essential ideas used in our later developments.  Fukaya-Oh-Ohta-Ono \cite{FOOO-T,FOOO-T2,FOOO-T3} gave a deep study on the Lagrangian Floer theory of toric manifolds and proved closed-string mirror symmetry.

\subsection*{Example: the two-sphere}
Consider $\bP^1$ equipped with the spherical area form (as a symplectic form) and the meromorphic top-form $dz/z$.  It has an $\bS^1$-symmetry.  The $\bS^1$-moment map $\bP^1 \to [0,1]$ gives a special Lagrangian fibration.

The classical moduli space of special Lagrangian fibers \cite{McLean,Hitchin} is given by $(0,1)$.  The induced affine structure on $(0,1)$ is trivial.  The classical moduli does not capture symplectic topologies of Lagrangians, for instance the non-displaceability of fibers.

We can complexify the moduli space by decorating the special Lagrangian fibers with flat $U(1)$-connections.  We have the set
$$\{(\textrm{fiber, flat $U(1)$-connection})\} = (0,1) \times \bS^1.$$  
This is the well-known mirror space \cite{HV,CL}.

Instead of taking the set of special Lagrangian fibers decorated by flat $U(1)$-connections, we can take
\begin{equation} \label{eq:moduli}
\{(\textrm{stable Lagrangian in fiber class, flat $\Lambda_0^\times$-connection})\} / \textrm{(non-zero) Quasi-isom.}
\end{equation}
A `stable Lagrangian in fiber class' is a Lagrangian which is quasi-isomorphic to a special Lagrangian fiber.  Here we consider quasi-isomorphisms in the Fukaya category, see Section \ref{subsec:isominfuk}.  Moreover we enlarge the deformation space for each Lagrangian by taking $\Lambda_0^\times = \C^\times \oplus \Lambda_+$.  (Flat $\C^\times$ connections was introduced by \cite{Cho-non-unitary} to find non-displaceable Lagrangians.)

The above is simply $(0,1) \times \Lambda_0^\times$ as a set.  Different fibers do not intersect with each other and hence there is no morphism between them.  This is not good enough for understanding the rigid-analytic structure of the moduli.

Consider the further enlargement
\begin{equation} \label{eq:enlarge}
\{(\textrm{stable Lagrangian in fiber class, flat $\Lambda^\times$-connection})\} / \textrm{(non-zero) Pseudo-isom.}
\end{equation} 
Note that we cannot allow flat $\Lambda^\times$-connections in the Fukaya category, since it would destroy the energy filtration and Novikov convergence of the category.  Lagrangians equipped with flat $\Lambda^\times$-connections are not really objects in the Fukaya category.  However it is important to consider them for the rigid analytic structure of the moduli space.  We call these to be `pseudo-deformations'.  We define `pseudo-isomorphisms' between them formally the same as quasi-isomorphisms, namely morphisms (linear combinations of transverse intersection points over $\Lambda$ satisfying the cocycle condition) that have inverses up to homotopy.  (See Definition \ref{def:pdeform} and \ref{def:piso}.)

By allowing pseudo-deformations and pseudo-isomorphisms, there are relations between different fibers.  For instance consider the fibers $L_1$ and $L_2$ as in Figure \ref{fig:sphere}.  Take a rotation of $L_1$ (which is a Hamiltonian perturbation) and get $L_1'$.  Since $L_1'$ is isomorphic to $L_1$, it is stable.  

\begin{figure}[htb!]
    \includegraphics[scale=0.5]{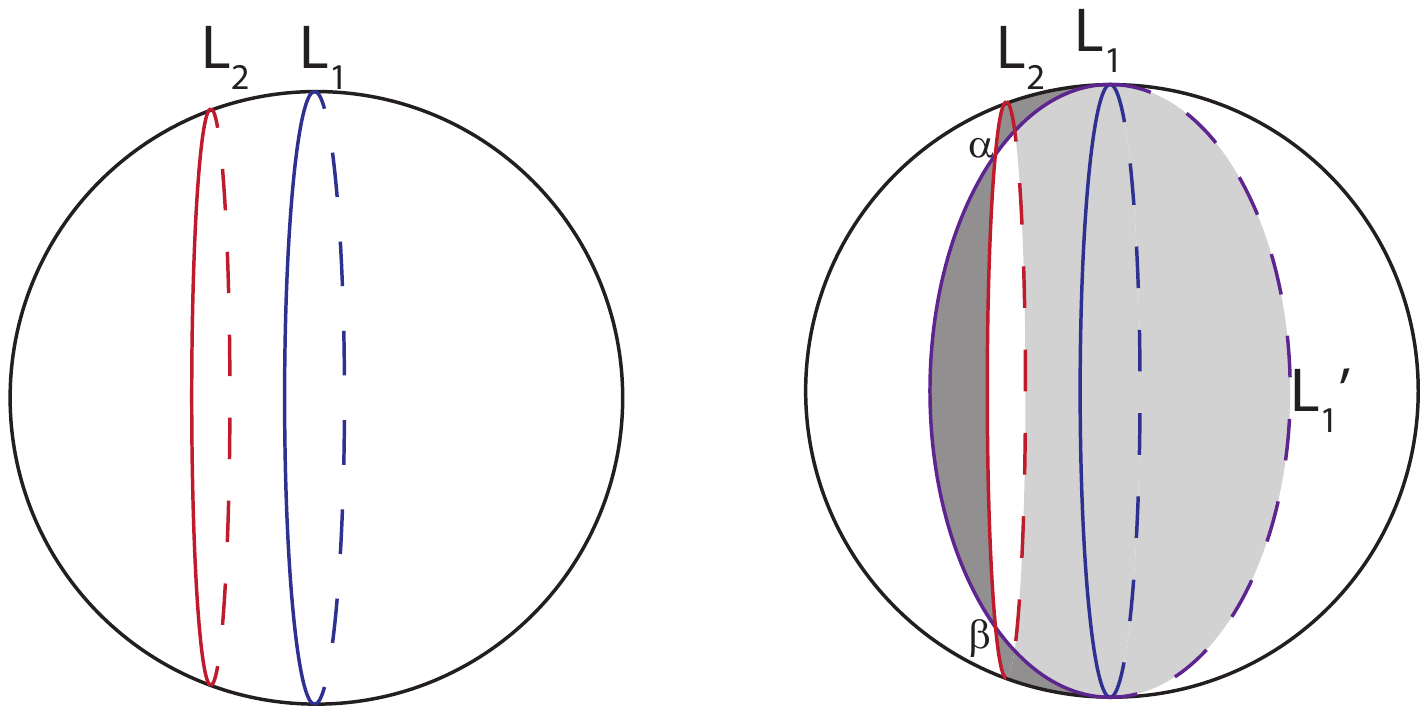}
    \caption{Stable Lagrangians in $\bP^1$.}
		\label{fig:sphere}
\end{figure}

$L_1'$ intersects $L_2$ at two intersection points $(\alpha,\beta)$.  There are two strips bounded by $L_1'$ and $L_2$.  Note that these two strips have different areas.
For a flat connection $\nabla_t$ on $L_1'$, we have 
$$m_1^{\nabla_t} (\alpha) = T^c (T^A - t) \beta$$ 
where $c$ is a constant and $A>0$ is the area of the cylinder bounded between $L_1$ and $L_2$.  

$T^A - t \not= 0$ if $t$ is only allowed to be $\Lambda_0^\times$-valued.  Thus there is no morphism between $(L_1',\nabla_t)$ and $L_2$.  This is as expected since $L_2$ (equipped with any decoration) is a trivial object in the Fukaya category, while $L_1'$ is not.  $(L_1',\nabla_t)$ and $L_2$ are different elements in Equation \eqref{eq:moduli}.

We can set $t=T^A$ if we allow it to be $\Lambda^\times$-valued! Then $(\alpha,\beta)$ provides a pseudo-isomorphism between $(L_1',\nabla_t)$ and $L_2$.  Thus $(L_1',\nabla_t)$ (for $t=T^A \in \Lambda^\times$) and $L_2$ are the same element in Equation \eqref{eq:enlarge}.

We see that any fibers are pseudo-isomorphic to $(L_1,\nabla_t)$ for some $t \in \Lambda^\times$ (even though they are not intersecting with each other).  The expression \eqref{eq:enlarge} equals to $\Lambda^\times$ which has a rigid analytic structure.  Expression \eqref{eq:moduli} is a subset of Expression \ref{eq:enlarge} and hence equipped with the induced topology.  Indeed it is $\Lambda^\times_{0 < \mathrm{val < 1}}$ which equals to $(0,1) \times \Lambda_0^\times$ as a set.

Expression \eqref{eq:moduli} is not yet the moduli.  We have a superpotential $W$ coming from the $m_0$-term of each element in \eqref{eq:moduli}.  In this case $W = t + T^{A_S} t^{-1}$ where the two terms correspond to the left and the right discs bounded by a fiber (and $A_S$ is the area of the sphere).  The moduli is the Landau-Ginzburg model 
$(\Lambda^\times_{0 < \mathrm{val < 1}},W)$, whose critical locus consists of two points corresponding to two flat connections on the equator which is non-displaceable.  This was deeply studied in \cite{FOOO-T}.

\section{Deformation space of a Lagrangian immersion}

We review basic Lagrangian Floer theory, in particular the formal deformation of Lagrangian submanifolds following \cite{FOOO}. We focus on two specific types of formal deformation, one by flat line bundles over Lagrangian tori and the other by immersed generators for Lagrangian immersions. We will also briefly explain immersed Lagrangian Floer theory by Akaho-Joyce \cite{AJ}.

\subsection{Review on Lagrangian Floer theory: formal deformation}

Let $(X,\omega)$ be a symplectic manifold (possibly noncompact) and consider a compact Lagrangian $L$ in $X$. By \cite{FOOO}, we have a gapped filtered $A_\infty$-algebra $\left(CF(L,L),\{m_k\}_{k \geq 0} \right)$ possibly with a nontrivial curvature $m_0$. For a degree odd (or degree one when $L$ is graded) element $b$, one can deform this $A_\infty$-structure as follows.
\begin{equation}\label{eqn:fdefb}
m_k^{b, \cdots, b} (x_1, \cdots, x_k):= \sum_{\substack{i_1,\cdots, i_{k+1} \\ i_1 + \cdots + i_{k+1} = l}} m_{k+l} (\overbrace{b, \cdots, b}^{i_1} ,x_1, b, \cdots, b , x_k , \overbrace{b, \cdots, b}^{i_{k+1}}).
\end{equation}
It is easy to see that $\{m_k^b\}$ defines an $A_\infty$-algebra \cite{FOOO}.

We are interested in $b$ which staistifes the following condition so that $(L,b)$ defines an object of the Fukaya category.

\begin{defn} $b$ is called a  Maurer-Cartan element if
$$m_0^b= m_0 (e^b) =m_0(1) + \sum_{k=1}^{\infty} m_k (b, \cdots,b ) = 0.$$
In this case $(L,b)$ is said to be unobstruced.

It is called weak Maurer-Cartan if the right hand side is a (Novikov) constant multiple of $\one_{L} := PD[L]$, in which case $(L,b)$ is said to be weakly unobstructed.
\end{defn}

We denote the space of such $b$'s by $\mathcal{MC}(L)$ for Maurer-Cartan elements, and by $\mathcal{MC}_{weak} (L)$ for weak Maurer-Cartan elements.  Note that deformations by different elements in $\mathcal{MC}(L)$ could lead to isomorphic objects.  We shall identify them in the construction of moduli spaces.  Isomorphisms are discussed in Section \ref{sec:gluing}.

$\mathcal{MC} (L)$ can be interpreted as a (formal) deformation space of $L$.  For instance, in the case of toric manifolds, $\mathcal{MC}_{weak} (L)$ agrees with the first cohomology of $L$ which encodes geometric deformation of $L$ (see \cite{FOOO-T}).

In case there is no danger of confusion, we will sometimes call $\mathcal{MC} (L)$ (or $\mathcal{MC}_{weak} (L)$) deformation space or moduli space, omitting ``formal" in front of them.

\subsection{Deformation by $\C^\times$ flat line bundles on $L$}

Instead of using elements in $CF(L,L)$, one can vary the holonomy of a flat line bundle over a Lagrangian $L$ to deform $L$ (as an object of Fukaya category). More precisely, if we equip $L$ with a  line bundle $E$ with a $\C^\times$-flat connection $\nabla$, the $A_\infty$-operations on $CF(L,L)$ are deformed as follows:
$$m_k^{(L,\nabla)} (x_1, \cdots, x_k) =  \sum_{\beta \in \pi_2(M,L)} \left( hol_{\partial \beta} \nabla \right) m_{k,\beta} (x_1, \cdots, x_k) T^{\omega(\beta)}$$
where $m_{k,\beta}$ is the contribution from holomorphic disks in class $\beta$ to the original $m_k$-operator on $CF(L,L)$. By considering all possible holonomies, we obtain a (formal) moduli of objects in $\Fuk(M)$ isomorphic to $\left(\C^\times\right)^{\dim_\R H_1 (L;\R)}$, which is nothing but the space of all $\C^\times$ flat line bundles on $L$ modulo equivalence. One can slightly enlarge this space by considering $\Lambda^\times$-line bundle, but the holonomy is still required to have valuation zero (i.e. it should start with nontrivial complex number without $T$) so we only have a slice of $\left(\Lambda^\times \right)^{\dim_\R H_1 (L;\R)}$.

In what follows, we will mainly consider Lagrangian tori equipped with a $\C^\times$-flat line bundle.
Furthermore, we will always choose a special representative of $\C^\times$-line bundles whose connection behaves like a delta function. More precisely, for $L \cong \R^n / \Z^n$, we fix (oriented) hyper-tori $H_i = \epsilon_i + \R \langle e_i \rangle$ for $\epsilon_i \in \R/\Z$ so that the parallel transport over a path $\gamma$ is given by multiplying $z_i^{\pm}$ whenever $\gamma$ runs across $H_i$ where the sign in the exponent is determined by the parity of the intersection $\gamma \cap H_i$. (See \cite{CHLtoric} for more details.)

\subsection{Immersed Floer theory}
For an immersed Lagrangian $L$, we consider the boundary deformation induced by self-intersections. First, let us review Floer theory on $L$ developed by \cite{AJ}. We assume that $L$ has transversal intersection only. Let $\iota : L \to M$ be such a Lagrangian immersion. The Floer complex $CF(L,L)$ is generated by cochains on
$$L \times_{\iota} L = \{ (x,y) \in L \times L \mid \iota(x) = \iota(y)\}.$$
Notice that each self-intersection point of $\iota$ gives two distinct generators, which are points in
the off-diagonal component of $L \times_{\iota} L$. They have the following geometric meaning in Floer theory.
Let $A$ and $B$ be two local branches of $L$ meeting at a self-intersection point $X$ of $\iota$. Then the corresponding generators, say $X$ and $\bar{X}$, of $CF(L,L)$ describe the jumps of the boundary of a holomorphic disk from $A$ to $B$ or from $B$ to $A$, when we travel along the boundary in positive orientation.

If $X_1 \cdots, X_l$ denote odd-degree (degree one for $L$ graded) generators from self-intersection points, we use the linear combination $b:=\sum x_i X_i$ to formally deform the Lagrangian $L$ as in \eqref{eqn:fdefb} where $x_i$'s are a priori free variables. Namely $x_i$'s are taken from the free algebra generated by themselves. Here, we fix the convention for $A_\infty$-operations involving free variables by setting
$$m_k (x_{i_1} X_{i_1} ,\cdots, x_{i_k} X_{i_k} ):= x_{i_k} \cdots x_{i_1} m_k (X_{i_1}, \cdots, X_{i_k}),$$
i.e. we pull out the coefficients from the back.

Unlike $\C^\times$-flat connections, not every such linear combinations define legitimate deformation, as the Maurer-Cartan equation $\sum_k m_k (b,\cdots,b)$ may impose nontrivial relations among free variables $x_i$'s. 
Therefore the corresponding deformation space is the Spec of the algebra generated by $x_i$'s modulo relations from the MC-equation. Note that this algebra is in general noncommutative, though commutative variables $x_i$ solve MC-equation in  most of examples we will consider in this article.

\subsection{An example: pair-of-pants}\label{subsec:expop}
Consider the Lagrangian immersion $L$ from $S^1$ constructed by Seidel \cite{Seidel-g2} in a pair-of-pants.  See Figure \ref{fig:Seidel_Lag}.  There are three transverse immersed points, giving the immersed generators $X,Y,Z$ in odd degree and $\bar{X},\bar{Y},\bar{Z}$ in even degree.  The Floer complex is $\CF(L,L) = \mathrm{Span}\{\mathbf{1},X,Y,Z,\bar{X},\bar{Y},\bar{Z},\mathrm{pt}\}$ as a vector space.

\begin{figure}[htb!]
    \includegraphics[scale=0.4]{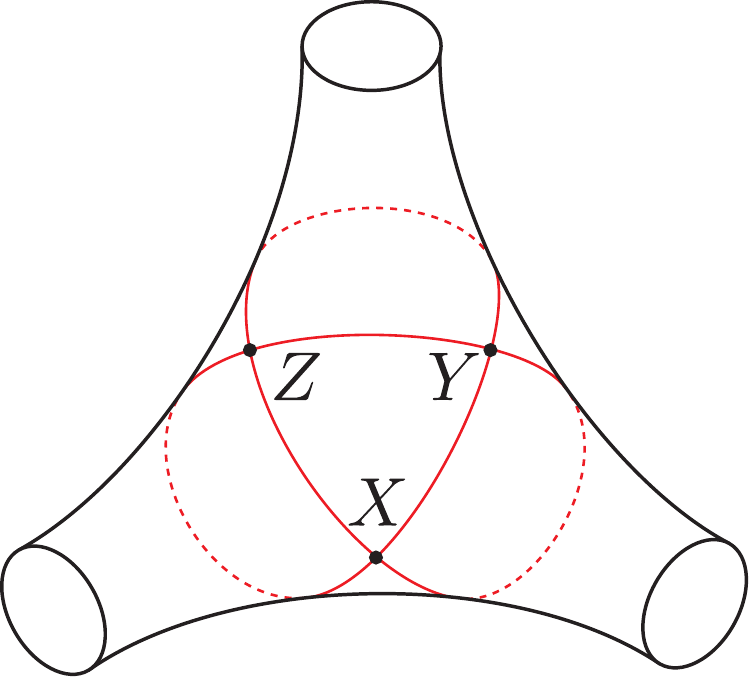}
    \caption{Seidel Lagrangian in a pair-of-pants}\label{fig:Seidel_Lag}
\end{figure} 

The following lemma is essential to make sense of the deformation theory (over $\C$).  It is due to the fact that the areas of the two triangles bounded by $L$ are the same, and their contributions to $m_0^b$ (for $b=xX + yY + zZ$) cancel with each other (when we choose a non-trivial spin structure on $L$).

\begin{lemma}
The formal odd-degree deformations $b=xX + yY + zZ$ are weakly unobstructed for $x,y,z \in \C$. i.e. $\mathcal{MC}_{weak} (L) \cong \C^3$. 
\end{lemma}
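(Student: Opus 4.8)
The plan is to compute $m_0^b$ term by term in the Novikov parameter $T$, using the explicit description of holomorphic polygons bounded by the Seidel Lagrangian $L$ in the pair-of-pants. Since $L$ is an immersed circle with three transverse double points, the relevant $J$-holomorphic disks with corners at the immersed generators $X,Y,Z$ are the rigid triangular disks cut out by $L$. First I would classify these contributions: by positivity of area and the topology of the pair-of-pants, the only disks contributing to the $m_0^b$-term with boundary corners among $\{X,Y,Z\}$ (and no inputs, since $m_0^b$ comes from $\sum_k m_k(b,\dots,b)$ with $b = xX+yY+zZ$) are the two embedded triangles, one ``clockwise'' and one ``counterclockwise,'' each with corners at all three immersed points. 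The key geometric input, stated in the lemma's preamble, is that these two triangles have equal symplectic area, say $A$, so each contributes a term proportional to $T^A$.

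The second step is the sign/holonomy computation. Each triangle contributes a monomial $xyz \cdot T^A$ (up to ordering of the free variables, which by the stated convention ``pull out coefficients from the back'' means $zyx$, but for commuting $\C$-valued $x,y,z$ this is just $xyz$) times a sign coming from the chosen spin structure on $L$ together with the orientation of the moduli space of the triangle. Here I would invoke the fact that with the non-trivial spin structure on $S^1$, the two triangles acquire opposite signs: the holonomy of the non-bounding spin structure around the two boundary loops differs, so the $m_3$-contributions $m_3(X,Y,Z)$-type terms from the two triangles enter $m_0^b$ with opposite signs and cancel. Hence the coefficient of $\one_L$ in $m_0^b$ receives no $xyz\,T^A$ contribution, and more importantly the ``dangerous'' non-unit contributions vanish: any output landing in a generator other than $\one_L$ must also come in such cancelling pairs, because the two triangles are related by the orientation-reversing symmetry of the picture.

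The third step is to check that all surviving terms are multiples of $\one_L = PD[L]$. The constant term $m_0(1)$ itself: $L$ is weakly unobstructed before deformation (there are the two triangle disks with one output $\one_L$, or more precisely the disk contributions to $m_0(1)$ land in $\one_L$ with coefficients that again cancel or combine into a multiple of $\one_L$), and for degree reasons any polygon with only $X,Y,Z$-corners and output in $CF(L,L)$ has output in degree $0$, i.e. a multiple of $\one_L$ or of $\mathrm{pt}$; the $\mathrm{pt}$-outputs are excluded by the dimension/area constraints in the pair-of-pants (there is no rigid disk of the right Maslov index with a single output $\mathrm{pt}$). Therefore $m_0^b \in \Lambda \cdot \one_L$ for all $x,y,z \in \C$, which is exactly the weak Maurer-Cartan condition, giving $\mathcal{MC}_{weak}(L) \cong \C^3$.

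The main obstacle I anticipate is the sign bookkeeping in step two: one must set up consistent orientations on the moduli spaces of the two triangles and track how the non-trivial spin structure twists them, since the whole statement hinges on an exact cancellation rather than an inequality. A secondary subtlety is verifying that \emph{no other} holomorphic polygons contribute — this requires a careful energy and index count ruling out multiply-covered or higher-corner configurations, for which I would appeal to the explicit local model of the pair-of-pants (e.g.\ as in Seidel \cite{Seidel-g2} or Sheridan \cite{Sheridan11}) and the fact that $b$ has valuation zero so only finitely many disk classes enter at each order in $T$.
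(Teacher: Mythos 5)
There is a genuine gap in steps two and three, and it sits at the heart of the lemma. Your list of possible outputs of $m_0^b$ --- ``a multiple of $\one_L$ or of $\mathrm{pt}$'' --- omits the even-degree immersed generators $\bar X,\bar Y,\bar Z$, which are precisely the components that could obstruct weak unobstructedness (for an immersed circle $\mathrm{pt}\in H^1(S^1)$ is odd, while $m_0^b$ is even, so $\mathrm{pt}$ is excluded by parity alone and is not the issue). Each of the two triangles can be read with one corner as output and the other two as inputs, so $m_2(yY,zZ)$ and its reorderings produce terms $\pm T^{A_i}\,yz\,\bar X$ from the front and back triangles (and similarly for $\bar Y,\bar Z$). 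It is \emph{these} contributions that cancel: the non-trivial spin structure makes the two triangles enter with opposite signs, and the equality $A_1=A_2$ of their areas makes the cancellation exact. That is exactly the content of the paper's one-sentence justification preceding the lemma.

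By contrast, your claim that the $\one_L$-coefficient also cancels, so that $m_0^b$ receives no $xyz\,T^A$ contribution, is false and contradicts the line immediately after the lemma, $m_0^b = W\cdot\one_L$ with $W=xyz\neq 0$: the two triangles are the only non-constant polygons bounded by $L$ in the open pair-of-pants, so if their unit contributions cancelled the superpotential would vanish and the Landau--Ginzburg mirror $(\C^3,xyz)$ would collapse. The correct bookkeeping is the opposite of what you wrote: reading each triangle with all three corners as inputs produces the $\one_L$-output, and these survive and assemble into $W=xyz$; reading each triangle with one corner as output produces the $\bar X,\bar Y,\bar Z$-outputs, and these cancel in pairs. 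Your first step (only the two embedded triangles contribute; no multiple covers or higher-corner polygons in the open pair-of-pants; constant polygons do not enter $m_k(b,\dots,b)$ since $b$ contains only the odd generators) is correct and agrees with the paper's implicit argument, as does your attention to the sign/spin-structure bookkeeping --- it just needs to be applied to the right components.
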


We have $m_0^b = W \cdot \one_L$ where $W = xyz$ and $\one_L$ is the unit (namely the fundamental class).  In conclusion the moduli space is given by the Landau-Ginzburg model $(\C^3, xyz)$.  In Section \ref{sec:app} we consider a pair-of-pants decomposition and glue these local spaces together to form a global moduli.

\begin{remark}
The pair-of-pants can be compactified to an orbifold $\bP^1_{a,b,c}$.  In \cite{CHL,CHKL} we used the Seidel Lagrangian $L$ to construct and compute the mirror, and derived homological mirror symmetry.  In an ongoing work with Amorim we are proving closed-string mirror symmetry along this line.  In \cite{LZ} it was shown that the mirror superpotential $W$ of elliptic orbifolds ($\frac{1}{a}+\frac{1}{b}+\frac{1}{c}=1$) has interesting modular properties.
\end{remark}

\section{Gluing of formal deformation spaces} \label{sec:gluing}

We will glue formal moduli spaces of Lagrangians making use of explicit isomorphisms in the Fukaya category. We first recall the notion of isomorphisms in an $A_\infty$-category, and explain how it can be used in our geometric setting in which we have two Lagrangians whose formal deformations are isomorphic over a certain subset of formal moduli spaces. At the end of the section, we describe a homotopy model for the B-side category for the glued formal moduli space following \cite{BB2013}.

\subsection{Isomorphisms between objects of the Fukaya category}\label{subsec:isominfuk}

First, we introduce the notion of isomorphisms between two objects in an $A_\infty$-category.

\begin{defn}
Let $L$ and $L'$ be two objects in an $A_\infty$-category $\mathcal{C}$. $\alpha \in \Hom_{\mathcal{C}} (L,L')$ is called a quasi-isomorphism if $m_1(\alpha)=0$ and there exists $\beta \in \Hom_{\mathcal{C}} (L',L)$ such that 
$$m_2(\alpha,\beta) = \one_{L} + m_1 (\gamma) \quad \mbox{and} \quad m_2(\beta,\alpha) = \one_{L'} + m_1 (\gamma')$$
for some $\gamma$ and $\gamma'$. If $m_1(\gamma)=m_1(\gamma')=0$, then $\alpha$ (and $\beta$) is called a strict-isomorphism (or simply an isomorphism).
\end{defn}

One can easily check that quasi-isomorphisms define an equivalence relation on the set of objects in $\mathcal{C}$. In particular, the composition of two quasi-isomorphisms (i.e. $m_2$ of the two) is also a quasi-isomorphism.

We next examine the gluing of formal moduli spaces for two different (possibly immersed) Lagrangians $L$ and $L'$ in a symplectic manifold $M$. For $b \in U \subset \mathcal{MC} (L)$ and $b' \in  U' \subset \mathcal{MC} (L')$, suppose that there is a diffeomorphism $f : U \to U'$ and a quasi-isomorphism
$$ \alpha : (L,b) \to (L',f(b))$$
between objects in $\Fuk (M)$. In general $\alpha$ may depend on $b$ as well, but here we assume that $\alpha$ is fixed element in the vector space $\oplus_{x \in L \cap L'} \Lambda \langle x \rangle$ for simplicity. 

When $L$ and $L'$ are weakly unobstructed with nontrivial potentials 
$$ W: \mathcal{MC}_{weak} (L) \to \Lambda,\quad W': \mathcal{MC}_{weak} (L') \to \Lambda,$$
we further require that $W'(f(b)) = W(b)$ in order to glue two resulting Landau-Ginzburg models $(\mathcal{MC}_{weak} ( L),W)$ and $(\mathcal{MC}_{weak} (L'), W')$. In fact, this condition is necessary to have a quasi-isomorphsm $\alpha : (L,b) \to (L', f(b))$ since the Floer differential $m_1^{b,b'}$ on $CF((L,b),(L',b'))$ for $b'=f(b)$ squares to be zero only under such an assumption, since
$$ \left(m_1^{b,b'} \right)^2 = W(b) - W'(f(b)).$$

\subsection{Pseudo-deformations and Pseudo-isomorphisms} \label{sec:pseudo}
To understand the relation between two nearby Lagrangians in the same moduli space, we have to consider pseudo-deformations and pseudo-isomorphisms.  They are not valid in Fukaya category.  However they provide a way to talk about Lagrangians close to $L$ (but not isomorphic to $L$) by formal pseudo-deformations on $L$.  Using this notion a neighborhood of $L$ in the moduli can be identified as an open subset of $\Lambda^n$ and is endowed with a rigid analytic structure.

\begin{defn} \label{def:pdeform}
A pseudo-deformation $b$ of a Lagrangian immersion $L$ is either a flat $\Lambda^\times$-connection on $L$, or a linear combination of degree-one immersed generators of $L$ over the Novikov field $\Lambda$, which satisfies that $m_k^{b,\ldots,b}$ converge in Novikov sense for all $k \geq 0$.
\end{defn}

Intuitively the Novikov convergence above means that the formal deformation is small enough so that we can talk about its image under the exponential map.  Note that the above condition concerns $L$ itself only but not any other Lagrangians.  In the filtered Fukaya category we only allow formal deformations by flat $\Lambda_0^\times$-connections or linear combinations of degree-one immersed generators with coefficients in $\Lambda_{>0}$ (or $\Lambda_{\geq 0}$ in better situations) which ensures Novikov convergence of the whole Fukaya category.

\begin{defn} \label{def:piso}
Let $b,b'$ be weakly unobstructed pseudo-deformations on $L,L'$ respectively.  $\alpha \in \Hom (L,L')$ is called a pseudo-isomorphism from $(L,b)$ to $(L',b')$ if the $A_\infty$-operations $m_k^{\mathbf{b}_0,\ldots, \mathbf{b}_k}$ for all $k\geq 0$ (defined in the same way as in the case of formal deformations) converge in Novikov sense where $\mathbf{b}_i$ can be either $b$ or $b'$, $m_1^{b,b'}(\alpha)=0$ and there exists $\beta \in \Hom (L',L)$ (with $m_1^{b',b}(\beta)=0$) such that 
$$m_2^{b,b',b}(\alpha,\beta) = \one_{L} + m_1^{b,b} (\gamma) \quad \mbox{and} \quad m_2^{b',b,b'} (\beta,\alpha) = \one_{L'} + m_1^{b',b'} (\gamma')$$
for some $\gamma$ and $\gamma'$.

$(L,b)$ and $(L',b')$ are said to be pseudo-isomorphic to each other if there is a chain of pseudo-isomorphisms $\alpha_1: (L,b) \to (L_1,b_1)$, $\alpha_2: (L_1,b_1) \to (L_2,b_2)$, \ldots, $\alpha_k: (L_{k-1},b_{k-1}) \to (L',b')$.
\end{defn}

Conceptually the Novikov convergence above means $L,L'$ are close enough to each other in the moduli so that we can compare their pseudo-deformations.

\begin{remark}
As in the example of $\bP^1$ in the Introduction, two pseudo-isomorphisms may not compose to a pseudo-isomorphism since the $m_k$ operations involving the three Lagrangians with pseudo-deformations may not converge.  Thus we need a chain of pseudo-isomorphisms in the above definition.
\end{remark}

\subsection{Construction of $A_\infty$-functor}

Given two formal moduli spaces and their gluing data as in the previous section, we have a B-model category that encodes complex information of glued moduli whose construction goes as follows. Let $\mathcal{D}_1$ and $\mathcal{D}_2$ be two dg-categories and suppose there are dg-functors 
$$\phi_i : \mathcal{D}_i \to \mathcal{E} \quad i=1,2$$
that land on another dg-category $\mathcal{E}$. Given these data, we have the following new dg-cateogory which models the gluing of $\mathcal{D}_1$ and $\mathcal{D}_2$ over $\mathcal{E}$.

\begin{defn}
The homotopy fiber product $\mathcal{D}_1 \times_{\mathcal{E}} \mathcal{D}_2$ is a dg-category whose objects are tuples of the form
$$\left(D_1, D_2, \phi_1 (D_1) \stackrel{\sigma}{\to} \phi_2 (D_2) \right)$$
where $\sigma$ is required to descend to an isomorphism on the cohomology level. A morphism between two objects 
is given as a tuple
\begin{equation}\label{eqn:morhptydg}
\begin{array}{l}
\Hom_{\mathcal{D}_1 \times_{\mathcal{E}} \mathcal{D}_2}^i ( (D_1,D_2, \sigma ), (D_1',D_2', \sigma') ) \\
= \Hom_{\mathcal{D}_1}^i (D_1, D_1' ) \oplus \Hom_{\mathcal{D}_2}^i (D_2, D_2') \oplus \Hom_{\mathcal{E}}^{i-1} (\phi_1 (D_1), \phi_2 (D_2')).
\end{array}
\end{equation}
\end{defn}

A morphism $(\mu_1,\mu_2, \tau)$ in \eqref{eqn:morhptydg} is closed under the differential if and only if both $\mu_1$ and $\mu_2$ are closed morphisms and $\tau$ is a homotopy between $\sigma' \circ \phi_1(\mu_1)$ and $ \phi_2 (\mu_2) \circ \sigma$.
\begin{equation*}
\xymatrix{ \phi_1(D_1) \ar[r]^{\sigma} \ar[d]_{ \phi_1(\mu_1)} \ar[dr]^{\tau}& \phi_2 (D_2) \ar[d]^{\phi_2(\mu_2)}\\
\phi_1 (D_1') \ar[r]_{\sigma'} & \phi_2 (D_2')
}
\end{equation*}
We will not spell out the dg-structure on $\mathcal{D}_1 \times_{\mathcal{E}} \mathcal{D}_2$ in further details. See \cite{BB2013} for the precise definition.

In our geometric situation, $\mathcal{D}_i$ is the category of coherent sheaves on $\mathcal{MC} (\BL_i)$ for two fixed Lagrangians $\BL_i$ $i=1,2$, or the matrix factorization category $\MF(W_i)$ when we deal with weak Maurer-Cartan elements and the resulting Landau-Ginzburg model $(\mathcal{MC}_{weak} (\BL_i), W_i)$. For an isomorphism
$$ f: U_1 \subset \mathcal{MC} (\BL_1) \to U_2 \subset \mathcal{MC} (\BL_2)$$
(or between subsets of $\mathcal{MC}_{weak} (\BL_i)$), which underlies a quasi-isomorphism
\begin{equation}\label{eqn:pisom12}
 \alpha : (\BL_1, b_1) \to (\BL_2, f(b_1)),
\end{equation}
$\mathcal{E}$ is either (dg-enhanced) $D^b \Coh (U_1)$ or $\MF(U_1, W|_{U_1})$ depending on the situation. 
The functors $\phi_i:\mathcal{D}_i \to \mathcal{E}$ are naturally induced by the restriction maps in this case.

\begin{theorem} We have a natural $A_\infty$-functor
$$ \Fuk(M) \to \mathcal{D}_1 \times_{\mathcal{E}} \mathcal{D}_2$$
where $\mathcal{D}_i$ and $\mathcal{E}$ are given as in the explanation above. 
\end{theorem}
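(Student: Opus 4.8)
The plan is to build the functor by gluing two copies of the (localized) mirror functor of Cho--Hong--Lau \cite{CHL,CHL2}, one attached to $\BL_1$ and one to $\BL_2$, and then to use the quasi-isomorphism $\alpha$ of \eqref{eqn:pisom12} to manufacture the comparison datum $\sigma$ that a lift to the homotopy fiber product requires. Recall that for a single weakly unobstructed Lagrangian $\BL_i$ the assignment
$$ L \longmapsto \Bigl( CF\bigl((\BL_i,b_i),L\bigr),\ \bigl\{ m_k^{b_i,\ldots,b_i,\,-,\ldots,-}\bigr\}_{k}\Bigr), \qquad b_i\in\mathcal{MC}(\BL_i), $$
already defines an $A_\infty$-functor $\mathcal{F}_i:\Fuk(M)\to\mathcal{D}_i$: letting $b_i$ vary turns $m_1^{b_i}$ into a (complex of) coherent sheaf(s) on $\mathcal{MC}(\BL_i)$, the identity $(m_1^{b_i,b_i})^2=W_i(b_i)\cdot\mathrm{id}$ makes it a matrix factorization of $W_i$ in the weakly unobstructed case, and the higher $m_k^{b_i,\ldots}$ supply the structure maps of $\mathcal{F}_i$. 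I would take this single-Lagrangian statement as known input.

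The object part of the new functor is then $L\mapsto\bigl(\mathcal{F}_1(L),\mathcal{F}_2(L),\sigma^L\bigr)$, where over the overlap $U_1\cong U_2$ (identified by $f$) one sets, for $b_1\in U_1$,
$$ \sigma^L\ :=\ m_2^{b_1,\,f(b_1),\,\cdot}(\alpha,\,-)\ :\ CF\bigl((\BL_1,b_1),L\bigr)\ \longrightarrow\ CF\bigl((\BL_2,f(b_1)),L\bigr). $$
The $A_\infty$-relation on the string $(\alpha,x,-)$ shows $\sigma^L$ is a chain map for the deformed differentials; composing with the analogous operator built from the homotopy inverse $\beta$ of $\alpha$, and using $m_2(\alpha,\beta)=\one_{\BL_1}+m_1(\gamma)$, shows $\sigma^L$ is a quasi-isomorphism, hence an isomorphism on cohomology, as the definition of the homotopy fiber product demands. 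As $b_1$ ranges over $U_1$ this is a morphism of sheaves (resp. matrix factorizations), and here the compatibility $W_1=W_2\circ f$ built into the gluing data is exactly what makes $\sigma^L$ intertwine the two curved differentials.

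On morphisms, a generator $p\in CF(L,L')$ is sent to a triple $(\mu_1,\mu_2,\tau)$ as in \eqref{eqn:morhptydg}: $\mu_i$ is the image of $p$ under $\mathcal{F}_i$ (the operator $m_2^{b_i,\cdot,\cdot}(-,p)$, with higher components $m_{k+1}^{b_i,\ldots}(-,\ldots,-,p)$), and $\tau$ is a homotopy filling the square relating $\sigma'\circ\phi_1(\mu_1)$ and $\phi_2(\mu_2)\circ\sigma$. The natural candidate is $\tau:=\pm\,m_3^{b_1,\,f(b_1),\,\cdot,\,\cdot}(\alpha,-,p)$, and the homotopy identity it must satisfy is precisely the $A_\infty$-relation in $\Fuk(M)$ on the input string $(\alpha,x,p)$ with the $b_1$- and $f(b_1)$-insertions distributed in the only consistent way (using $m_1^{b_1}(\alpha)=0$). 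More generally the higher components of the functor are assembled from the ``mixed'' operators $m_{k+1}^{\,b_1,\ldots,b_1,f(b_1),\ldots}(\alpha,x_1,\ldots,x_{k-1},p)$, and the full set of $A_\infty$-functor equations for $\Fuk(M)\to\mathcal{D}_1\times_{\mathcal{E}}\mathcal{D}_2$ decomposes, component by component, into: the functor equations for $\mathcal{F}_1$, those for $\mathcal{F}_2$, and the coherence relations among the $\tau$-components, which are themselves instances of $A_\infty$-associativity in $\Fuk(M)$ for strings containing a single $\alpha$ and arbitrarily many deformation insertions. Checking these coherences --- that the $\tau$'s fit together into a genuine $A_\infty$-functor and not merely a collection of homotopies --- is the heart of the argument; it is the same bookkeeping as in the proof that a localized mirror functor is $A_\infty$, now carried out on the mixed complexes $CF\bigl((\BL_1,b_1),(\BL_2,f(b_1))\bigr)$.

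Two issues need care, and the first is the real obstacle. (i) \emph{Convergence.} Every operator above is an infinite $T$-adic sum over disk classes, and the mixed operators through $\alpha$ involve disks bounded by both $\BL_1$ and $\BL_2$; one must shrink $U_1$ (bound the valuations of the $b_i$ from below) so that all of these converge simultaneously --- this is exactly the Novikov-convergence requirement of Definitions \ref{def:pdeform}--\ref{def:piso}, and it is why the gluing exists only over the subsets $U_i$ rather than over the whole moduli spaces. Making this uniform over all $L$ appearing as objects, and over all the higher mixed operations at once, is the delicate point. (ii) The statement assumes $\alpha$ is a fixed vector in $\bigoplus_{x\in\BL_1\cap\BL_2}\Lambda\langle x\rangle$; when $\alpha$ genuinely depends on $b_1$ one replaces it by the corresponding element-valued function on $U_1$ and the same construction runs, at the cost of extra $b_1$-derivative terms in the $A_\infty$-relations --- routine, but worth noting. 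Granting (i), the rest is a (lengthy) verification with no conceptual surprises beyond the single-Lagrangian case and the homotopy-fiber-product formalism of \cite{BB2013}.
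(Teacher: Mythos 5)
Your architecture matches the paper's: the two deformed Yoneda / localized-mirror functors $L \mapsto CF((\BL_i,b_i),L)$ give the $\mathcal{D}_i$-components, the comparison datum over the overlap is a deformed $m_2$ with the (quasi-)isomorphism inserted, and the higher components come from mixed operators with one insertion of that morphism and arbitrarily many deformation insertions. Note that the paper itself only records the object-level assignment and explicitly defers signs, higher components and the full proof to a separate paper, so your elaboration of the $\tau$-components and your convergence caveat (i) are a reasonable filling-in of the same plan rather than a different route; the convergence point you flag is indeed the reason the gluing only lives over the subsets $U_i$.

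One concrete step as written would fail: you build the comparison map from $\alpha \in CF(\BL_1,\BL_2)$, but a map $CF((\BL_1,b_1),L) \to CF((\BL_2,f(b_1)),L)$ must be induced by a morphism from $\BL_2$ to $\BL_1$, because $CF(-,L)$ is contravariant in its first entry; the string $(\alpha,p)$ with $\alpha \in CF(\BL_1,\BL_2)$ and $p \in CF(\BL_1,L)$ is not composable under any ordering convention, so $m_2^{b_1,f(b_1),\cdot}(\alpha,-)$ is not defined. This is why the paper fixes an inverse $\beta \in CF(\BL_2,\BL_1)$ of $\alpha$ and sets
$$ \sigma_\beta(p) \;=\; \sum_{i,j \geq 0} \pm\, m_{i+j+2}\bigl(\overbrace{b',\ldots,b'}^{i},\beta,\overbrace{b,\ldots,b}^{j},p\bigr), $$
i.e.\ your $\sigma^L$ should be the $\beta$-operator $m_2^{f(b_1),b_1,\cdot}(\beta,-)$, and likewise your $\tau$ should be $\pm\, m_3^{f(b_1),b_1,\cdot,\cdot}(\beta,-,p)$. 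With that substitution (and $m_1^{f(b_1),b_1}(\beta)=0$ in place of the corresponding condition on $\alpha$) the rest of your argument --- chain-map property from the $A_\infty$-relation on $(\beta,p)$, quasi-isomorphism from $m_2(\alpha,\beta)=\one_{\BL_1}+m_1(\gamma)$, and the identification of the coherences among the $\tau$'s with $A_\infty$-relations on strings containing a single $\beta$ --- goes through as you describe.
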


Let us fix an inverse $\beta \in CF(\BL_2, \BL_1)$ of the quasi-isomorphism $\alpha$ \eqref{eqn:pisom12}. The functor sends a Lagrangian $L \in \Fuk(M)$ to a tuple
$$ ( CF((\BL_1,b_1), L), CF((\BL_2, b_2), L), \sigma_\beta: CF((\BL_1,b_1), L)\to f^\ast CF((\BL_2, f(b_1)), L))   $$
where $b_1$ and $b_2$ vary over the corresponding formal deformation spaces for the first two components, and over $U$ and $U'$ in the third component. Note that both $CF((\BL_1,b_1), L)$ and $f^\ast CF((\BL_2, f(b_1)), L))$ are bundles over $U_1$. $\sigma_\beta$ is a map (homotopy equivalence in $A_\infty$-languat) is defined by
$$p \in \BL_1 \cap L \mapsto \sum_{i,j \geq 0} \pm m_{i+j+2} (\overbrace{b',\cdots,b'}^{i}, \beta, \overbrace{b,\cdots,b}^{j},p ). $$
Precise sign rules and the construction of higher components of the functor (as well as the proof of the theorem) will appear in the separate paper.
Throughout the article, we will only study the (formal) moduli spaces of Lagrangians themselves, and will not discuss these functors further.

We remark that one can also consider gluing of more than two charts using the categorical model for such a situation as in, for e.g., \cite{Wei2016}.

\section{Application I: mirror construction for pair-of-pants decompositions} \label{sec:app}

In this section we will use the gluing method given in the last section to construct a moduli space of Lagrangian immersions in a pair-of-pants decompsition.  It was briefly introduced in \cite{L16} and \cite{L17}.  Here we make a more precise formulation.
We shall focus only on punctured Riemann surfaces which already capture the essential ingredients.  We will complete the detail in higher dimensions in a future work.  

Homological mirror symmetry was proved for punctured Riemann surfaces due to the works of Abouzaid-Auroux-Efimov-Katzarkov-Orlov \cite{AAEKO}, Bocklandt \cite{Bocklandt}, Pascaleff-Sibilla \cite{PS2016} and Heather Lee \cite{HLee}.  In this article we will put our focus on the construction of the mirror as a moduli space of Lagrangian immersions.  We will use this construction to derive HMS in a separate paper.

\begin{figure}[htb!]
    \includegraphics[scale=0.35]{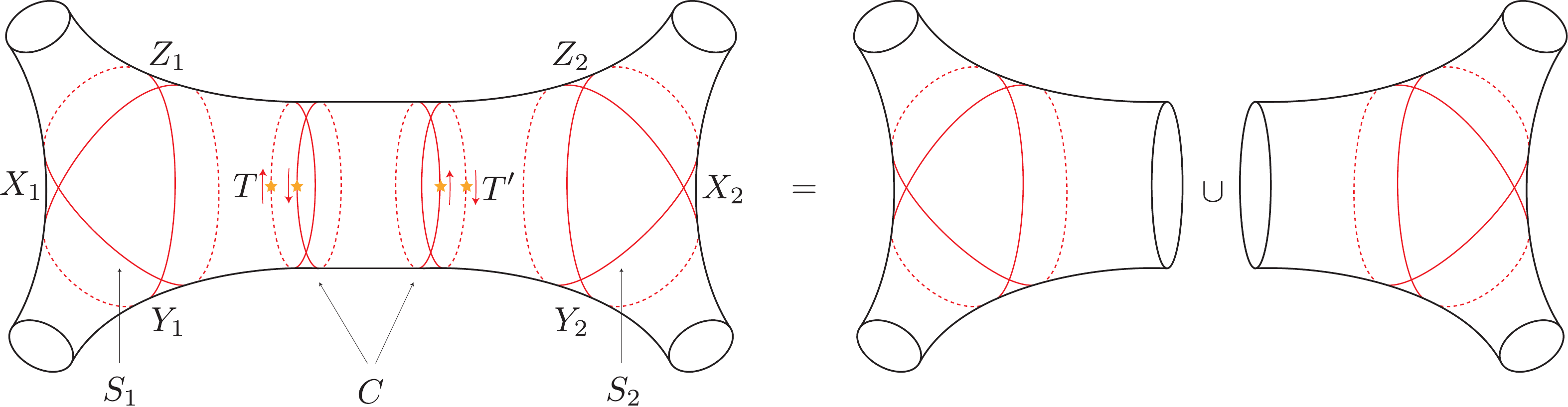}
    \caption{A pair-of-pants decomposition of the four-punctured sphere and immersed Lagrangians.}
		\label{fig:pp-decomp}
\end{figure}

For simplicity let's consider the four-punctured sphere as shown above.  The construction for more general pair-of-pants decompositions follows from the same method.  (We will clarify a delicate point for general pair-of-pants decompositions in Section \ref{sec:Novikov}).  Let's work over $\C$ at this stage, although later on we shall see that in general it is necessary to work over the Novikov ring $\Lambda_0$ (see Section \ref{sec:paradox}).

Recall from Section \ref{subsec:expop} that the deformation spaces of a Seidel Lagrangian is given by $(\C^3,W)$.
We need to glue the deformation spaces $(\C^3,W)$ of the two Seidel Lagrangians $S_1$ and $S_2$.  

To change from $S_1$ to $S_2$, 
there are two main processes: (partial) smoothing of $S_1$ (or $S_2$) to a double circle $C$, and gauge change of flat connections on $C$. (See Figure \ref{fig:pp-decomp}.)
The gluing we need is
$$ (\C^3,W^{S_1}) \overset{\small\textrm{smoothing}}\longleftrightarrow (\C^\times \times \C^2,W^{(C,T)}) \overset{\small\textrm{gauge change}}\longleftrightarrow (\C^\times \times \C^2,W^{(C,T')}) \overset{\small\textrm{smoothing}}\longleftrightarrow (\C^3,W^{S_2}). $$
(The notation $T$ and $T'$ are explained below.)

\subsection{Choices of gauge change}
First we consider gauge change which is more standard.  There is a vanishing sphere in the smoothing $C$ corresponding to the immersed point $X_1$ of $S_1$.  In this dimension it is simply the union of two points.  

Put a flat $\C^\times$ connection on $C$, which is acting by $t \in \C^\times$ when passing through the two points (in a chosen direction shown in Figure \ref{fig:pp-decomp}).  The union of the two points with the prescribed normal orientation is called a gauge cycle, and it is denoted by $T$.  Similarly $C$ can be obtained by smoothing $S_2$.  Denote the corresponding gauge cycle by $T'$.

The gauge cycles $T$ and $T'$ are different.  Changing the gauge cycles corresponds to gauge change of flat connections.  Let's take a homotopy from $T$ to $T'$ by moving the gauge cycles along the Lagrangian immersion $C$ (without jumping across branches).  In the process the gauge cycles pass through immersed points $Y_0$ or $Z_0$.

When a gauge cycle moves across an immersed point (say $Y_0$) of $C$, we can prove that the Floer theories before and after the move are isomorphic under a non-trivial change of coordinates as follows.

\begin{lemma}
Suppose a component in the gauge cycle $T$ passes through the immersed point $Y_0$ (and denote the new gauge cycle by $\tilde{T}$).  Denote the objects before and after the move by $(C,\nabla^{tT})$ and $(C,\nabla^{\tilde{t}\tilde{T}})$ where $\nabla^{tT}$ and $\nabla^{\tilde{t}\tilde{T}}$ are flat connections with the corresponding gauge.  Then the objects $(C,\nabla^{tT},y_0Y_0+z_0Z_0)$ and $(C,\nabla^{\tilde{t}\tilde{T}},\tilde{y}_0Y_0+\tilde{z}_0Z_0)$ are isomorphic via $\tilde{t}=t$, $\tilde{y}_0 = ty_0$ and $\tilde{z}_0=z_0$.
\end{lemma}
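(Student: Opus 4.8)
The plan is to realize the asserted isomorphism as the one induced by a gauge transformation on $C$, and then to read off the coordinate change from its action on the immersed generators $Y_0$ and $Z_0$. The first observation is that, because $\tilde T$ is obtained from $T$ by sliding one point of the gauge cycle along a branch of $C$ without jumping across any branch, the holonomies of $\nabla^{tT}$ and $\nabla^{\tilde t\tilde T}$ around every loop of $C$ coincide; so one is forced to take $\tilde t = t$, and there exists a gauge transformation $g \colon C \to \C^\times$ with $g\cdot \nabla^{tT} = \nabla^{\tilde t\tilde T}$. In the delta-function model for flat connections used above I would take $g$ supported on the thin arc $\gamma$ swept out by the moving point of the gauge cycle, where it transitions between the value $1$ and the value $t$ (the exponent pinned down by the chosen orientations of the gauge cycle and of the branch). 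Such a $g$ should induce an isomorphism $\Phi \colon (C,\nabla^{tT}) \to (C,\nabla^{\tilde t\tilde T})$ in $\Fuk(M)$ which fixes $\one_C$ and the point class $\mathrm{pt}$ and which multiplies an immersed generator sitting at a self-intersection by the ratio of the two sheet-values of $g$ at that point.

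The second step is to compute these ratios. The arc $\gamma$ is, by construction, the one whose moving endpoint crosses the branch of $C$ running through the immersed point $Y_0$; hence exactly one of the two preimages of $Y_0$ in the normalization of $C$ ends up past the transition region of $g$ while the other lies outside $\gamma$, so the two sheet-values of $g$ at $Y_0$ differ by a factor $t$, whereas neither preimage of $Z_0$ is crossed and there the ratio is $1$. It then follows that $\Phi$ carries the weak Maurer-Cartan element $y_0 Y_0 + z_0 Z_0$ on $(C,\nabla^{tT})$ to $t\,y_0\,Y_0 + z_0\,Z_0$ on $(C,\nabla^{\tilde t\tilde T})$, which is precisely the claimed identification $\tilde t = t$, $\tilde y_0 = t y_0$, $\tilde z_0 = z_0$. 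Finally, since isomorphic weakly unobstructed objects have equal curvature, the potentials automatically match, $W^{(C,\tilde T)}(\tilde t,\tilde y_0,\tilde z_0) = W^{(C,T)}(t,y_0,z_0)$, so the change of coordinates indeed glues the two Landau-Ginzburg models.

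The hard part will be the sign and orientation bookkeeping. One must check carefully that a gauge transformation genuinely induces an isomorphism in the immersed theory of Akaho-Joyce acting on immersed generators by the naive ratio of sheet-values, and --- the most delicate point --- that sliding the gauge cycle across $Y_0$ produces the factor $t$ with the correct exponent and not $t^{-1}$; this forces one to fix once and for all the orientation conventions for gauge cycles and for the branches at each immersed point, together with the spin structure on $C$, after which it reduces to a local computation in a model neighborhood of $Y_0$. As a fallback one can avoid $\Phi$ altogether: exhibit directly the morphism in $CF\big((C,\nabla^{tT},y_0Y_0+z_0Z_0),(C,\nabla^{\tilde t\tilde T},\tilde y_0Y_0+\tilde z_0Z_0)\big)$ realizing the isomorphism, verify it is closed for the deformed differential and admits a homotopy inverse, using that all $A_\infty$-operations on $C$ are combinatorially computable because $C$ lies in a surface, and extract the coordinate change from the leading term of $m_2$; this route is more robust but less transparent.
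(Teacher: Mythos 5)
Your proposal is sound in outline, but note that the paper itself offers no proof of this lemma --- it is stated as part of an expository announcement, with details deferred to separate papers --- so there is no argument of the authors' to compare against line by line. What can be said is that your primary route (realize the move of the gauge cycle as a locally constant gauge transformation $g$ on the normalization of $C$, equal to $t^{\pm 1}$ on the swept arc and $1$ outside, and read off the action on an immersed generator as the ratio of the two branch values of $g$) is the natural mechanism behind the stated coordinate change, and your ``fallback'' --- exhibiting the morphism in $CF$ explicitly and checking $m_1$-closedness and invertibility by direct polygon counting --- is precisely the method the paper does use for the analogous smoothing statements (Propositions \ref{prop:cocycle_4punc} and the one following it), so it is the safer route if one wants to match the authors' framework. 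Two points deserve explicit attention beyond what you wrote. First, the exponent of $t$ in $\tilde{y}_0 = t^{\pm 1} y_0$ is genuinely convention-dependent; the paper itself acknowledges this by recording only the constraint $a+b=2$ for the composite gauge change, so your plan to pin it down by a local model at $Y_0$ is the right (and unavoidable) step. Second, your closing remark that the potentials ``automatically match'' is correct as an equality of values, but it is worth flagging that the monomial expression changes: since $W^{(C,T)} = t y_0 z_0$ while $\tilde{t}\,\tilde{y}_0\,\tilde{z}_0 = t^2 y_0 z_0$, the potential in the new gauge must be $W^{(C,\tilde{T})} = \tilde{y}_0 \tilde{z}_0$ (the contributing triangle no longer crosses the moved gauge point); verifying this is a useful consistency check on the sign of the exponent and should be included rather than left implicit.
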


Note that there are infinitely many different homotopies between $T$ and $T'$.  For instance we can move one of the gauge points through the immersed point $Y$, and the other through $Z$.  Then the gluing is $t'=t^{-1}$, $y_0' = ty_0$ and $z_0'=tz_0$.  (We have $t'=t^{-1}$ instead of $t'=t$, due to the difference of normal orientations of $T$ and $T'$.)

In general 
$$t'=t^{-1}, y_0' = t^ay_0 \textrm{ and }z_0'=t^bz_0$$
for fixed $a,b \in \Z$ with $a+b=2$.
Thus different choices \emph{result in different models for the moduli}.  Nevertheless the critical loci of the superpotentials are isomorphic and the Landau-Ginzburg models are equivalent.

\subsection{Smoothing}  
Now consider the gluing between the deformation spaces of $S_1$ and $C$.
They are given by $(x_1,y_1,z_1) \in \C^3$ and $(t,y_0,z_0) \in \C^\times \times \C^2$ respectively.  (We only use $\C$-valued deformations for the moment.)

The superpotentials are $x_1y_1z_1$ and $ty_0z_0$ for $C$ and $S_1$ respectively.  Intuitively to match them, we should put $x_1=t,y=y_0,z=z_0$.  Below we justify this by Floer theory.  In this example, the gluing map is very simple and we can obtain it by guess; the following gives a more systematic way to find the gluing map.

We claim that $(S_1,x_1X_1+y_1Y_1+z_1Z_1)$ and $(C,tT+y_0Y_0+z_0Z_0)$ are `pseudo'-isomorphic to each other under a suitable gluing map analogous to above.  Note that $S_1$ and $C$ are not intersecting with each other at all, and so they can never be isomorphic! We will come back to this point in Section \ref{sec:paradox}.

The trick is to deform $S_1$ to $S_1^x$ which intersects $C$ at eight points $a_i,b_i,c_i,d_i$ for $i=1,2$ as in the figure.  We use cocycle conditions to deduce the gluing between $S_1^x$ and $C$.

\begin{figure}[htb!]
    \includegraphics[scale=0.25]{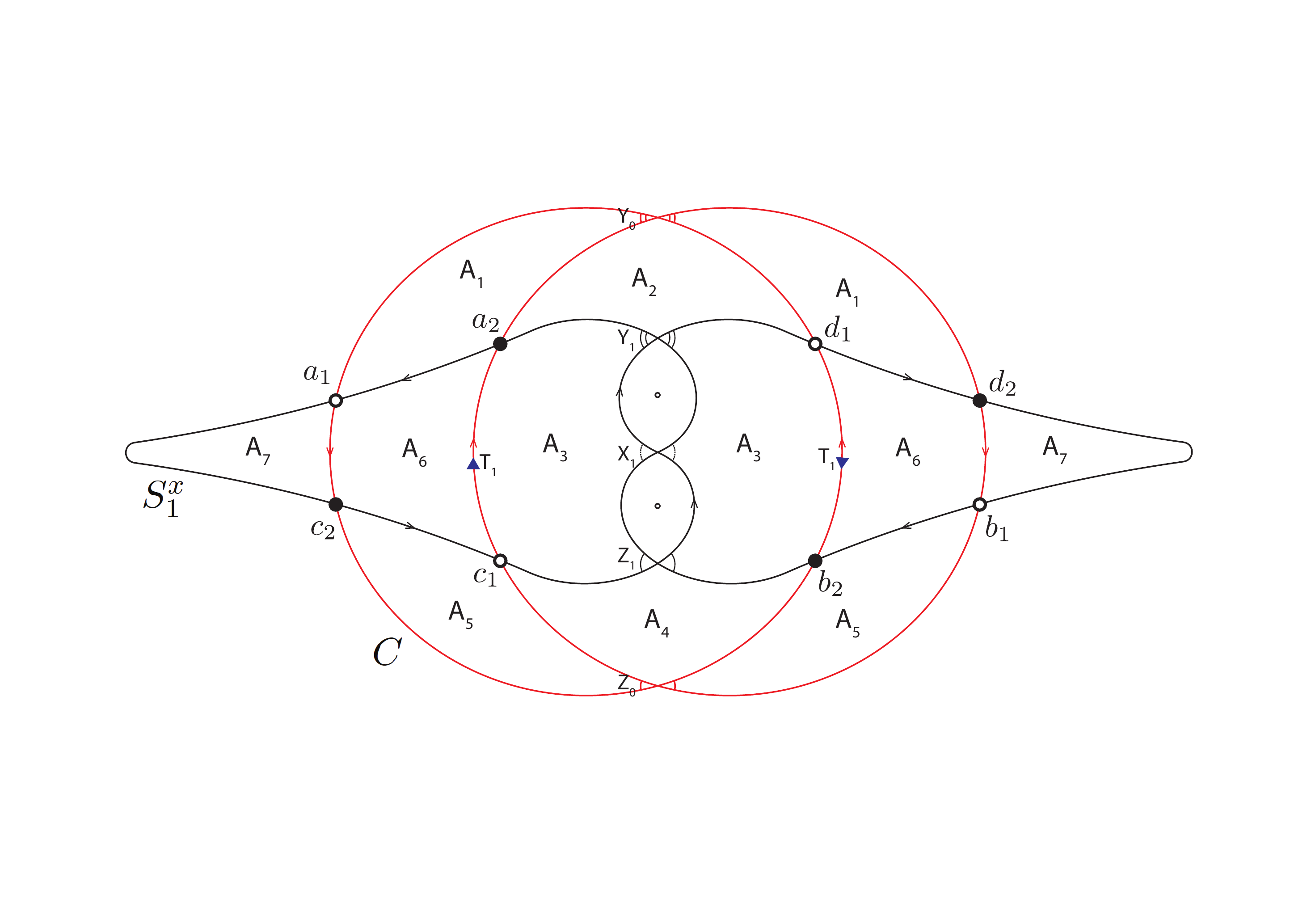}
    \caption{The deformed Seidel Lagrangian $S_1^x$ and the double-circle $C$.}
\end{figure}

Consider $a_1+b_1 \in CF((C,\nabla^{tT},y_0Y_0+z_0Z_0),(S_1^x,x_1X_1+y_1Y_1+z_1Z_1))$ and $c_2+d_2 \in CF((S_1^x,x_1X_1+y_1Y_1+z_1Z_1),(C,\nabla^{tT},y_0Y_0+z_0Z_0))$.  Consider cocycle conditions on $a_1+b_1$ and $c_2+d_2$.  First we check that the cocycle conditions are satisfied.  Then we show that $(a_1+b_1,c_2+d_2)$ gives isomorphisms between the two objects.

\begin{prop} \label{prop:cocycle_4punc}
For a suitable choice of $S_1^x$, we have 
$$m_1^{((C,\nabla^{tT},y_0Y_0+z_0Z_0),(S_1^x,x_1X_1+y_1Y_1+z_1Z_1))}(a_1+b_1) = 0$$ and $$m_1^{((S_1^x,x_1X_1+y_1Y_1+z_1Z_1),(C,\nabla^{tT},y_0Y_0+z_0Z_0))}(c_2+d_2)=0$$ if and only if $x_1=t,y_1=y_0,z_1=z_0$.
\end{prop}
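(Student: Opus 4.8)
The plan is to compute the relevant Floer differentials $m_1$ explicitly, using the geometry of the holomorphic strips (polygons) bounded by $S_1^x$ and $C$, and to read off the coefficient conditions. First I would carefully set up the eight intersection points $a_i,b_i,c_i,d_i$ ($i=1,2$) in Figure 4, recording their gradings so that $a_1+b_1$ and $c_2+d_2$ are odd-degree (degree-one) elements, and specifying the local branches of $C$ involved near each point. Since $C$ is the double circle obtained by smoothing $S_1$, the intersection $S_1^x \cap C$ consists of pairs of points coming from each of the original immersed points $X_1,Y_1,Z_1$ of $S_1$, and the deformation parameters $t,y_0,z_0$ on $C$ together with $x_1,y_1,z_1$ on $S_1^x$ enter the $m_1^{b,b'}$-formula through the usual sum $\sum_{k} m_k^{\mathbf{b}_0,\ldots,\mathbf{b}_k}$.

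The key computational step is to enumerate the holomorphic polygons contributing to $m_1^{b,b'}(a_1+b_1)$: these are rigid $J$-holomorphic triangles and bigons with one input among $\{a_1,b_1\}$, output some generator in $CF((C,\ldots),(S_1^x,\ldots))$, and additional corners at the deformation generators $X_1,Y_1,Z_1$ (contributing $x_1,y_1,z_1$) and at the gauge cycle $T$ (contributing powers of $t$ via holonomy). Because we work over $\C$ at this stage, areas give trivial $T$-powers, so the polygon count reduces to combinatorics: I would show that the output of $m_1^{b,b'}(a_1+b_1)$ is a linear combination of two (or a few) generators, with coefficients of the schematic form $(x_1 - t)$ and $(y_1 - y_0)$ — up to signs fixed by the chosen spin structure — so that vanishing forces $x_1 = t$ and $y_1 = y_0$. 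The symmetric computation for $m_1^{b',b}(c_2+d_2)$, where the roles of the two Lagrangians are swapped, produces the condition $z_1 = z_0$ (and redundantly $x_1=t$), so that together the two cocycle conditions are equivalent to $x_1=t,\ y_1=y_0,\ z_1=z_0$. The "suitable choice of $S_1^x$" is invoked precisely to guarantee that the contributing polygons are the expected small ones and that no unwanted strips of positive area interfere; I would pin down $S_1^x$ as a small Hamiltonian pushoff arranged so that near each paired intersection there is exactly one thin triangle with a corner at the corresponding deformation generator.

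The converse direction is then immediate: assuming $x_1=t,y_1=y_0,z_1=z_0$, the two groups of polygon contributions cancel in pairs (this is where the nontrivial spin structure on $S_1$, already used in the Lemma of Section~\ref{subsec:expop} to make $b=xX+yY+zZ$ weakly unobstructed, makes the signs work out), giving $m_1^{b,b'}(a_1+b_1)=0$ and $m_1^{b',b}(c_2+d_2)=0$.

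The main obstacle I anticipate is the sign bookkeeping: getting the orientations of the moduli spaces of polygons, the Koszul-type signs in the $A_\infty$-operations with coefficients pulled from the back (as fixed in Section~\ref{sec:gluing}), and the spin-structure signs all consistent, so that the cancellations in the converse direction are genuine and the coefficients in the forward direction really are $(x_1-t)$ rather than $(x_1+t)$ or worse. A secondary subtlety is confirming that only the finitely many "obvious" polygons contribute for the chosen $S_1^x$ — i.e. an energy/index argument ruling out larger configurations — which is exactly what the phrase "for a suitable choice of $S_1^x$" is meant to absorb, and which I would state as a separate geometric lemma about the local model near each pair of intersection points rather than grind through here.
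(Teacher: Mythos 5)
Your proposal follows essentially the same route as the paper, whose entire proof is a direct count of the holomorphic strips in its Figure~\ref{fig:puncsph-discs}: paired strips through the corresponding immersed/gauge generators give coefficients of the schematic form $x_1-t$, $y_1-y_0$, $z_1-z_0$, whose vanishing is the stated condition. The one point to adjust is your reading of ``suitable choice of $S_1^x$'': as the paper's refined version later makes explicit (the relation $t=T^{A_1+\cdots+A_5-A_7}x_1$ for a general $S_1^x$), the choice is not about excluding unwanted polygons but about equalizing the areas of the two strips contributing to each matrix coefficient, so that over the Novikov ring the gluing is $x_1=t$ on the nose rather than $x_1=T^{c}t$.
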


It is obtained by direct counting of strips.  See Figure \ref{fig:puncsph-discs}.

\begin{figure}[htb!]
    \includegraphics[scale=0.4]{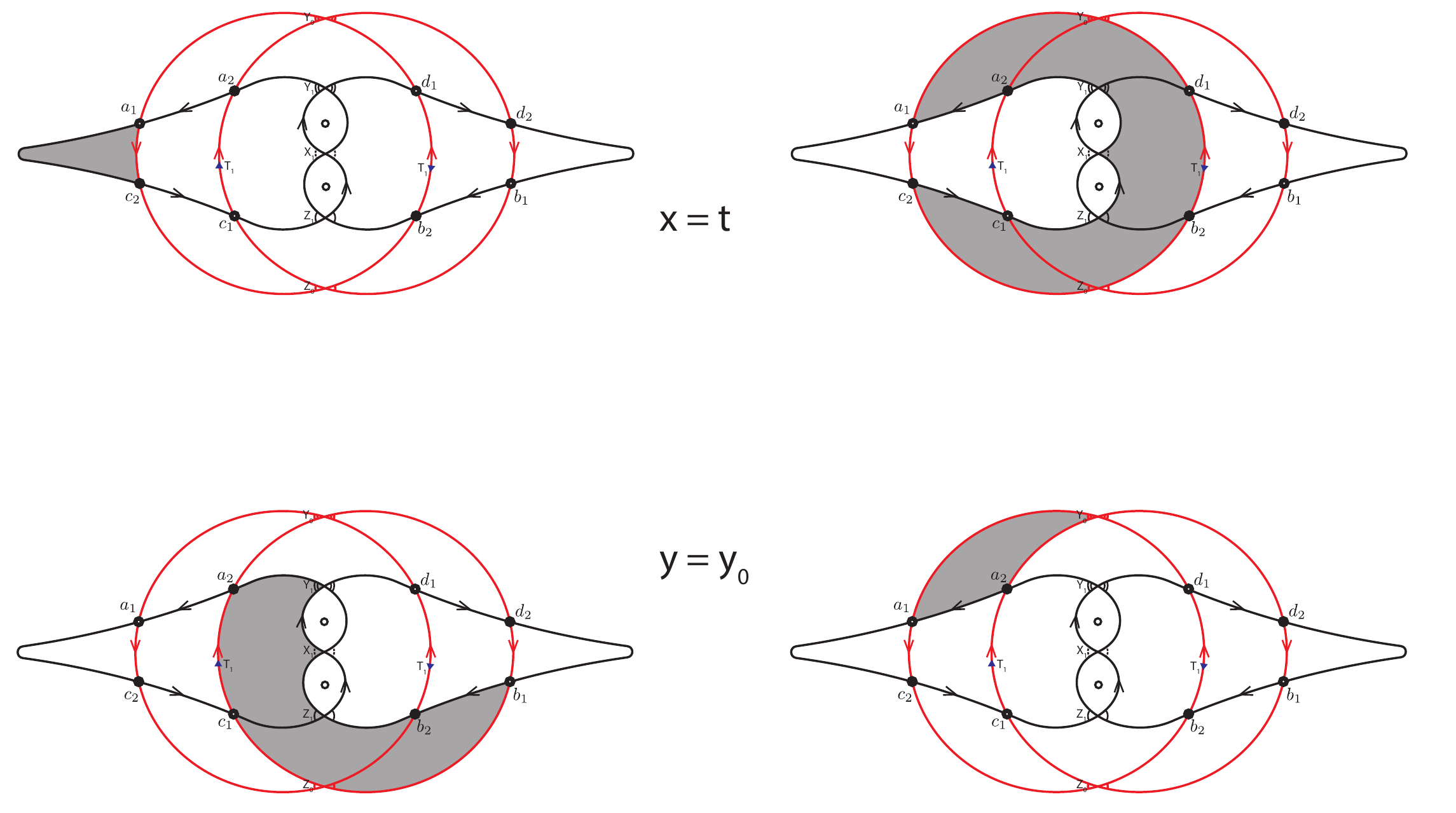}
    \caption{Holomorphic strips used in Proposition \ref{prop:cocycle_4punc}.}
		\label{fig:puncsph-discs}
\end{figure}

Similarly we can verify the following.

\begin{prop}
Under the same choice of $S_1^x$ as in Proposition \ref{prop:cocycle_4punc}, $m_2(a_1+b_1,c_2+d_2)=\one_{S_1^x}$ and $m_2(c_2+d_2,a_1+b_1)=\one_{C}$ if $x_1=t,y_1=y_0,z_1=z_0$.
\end{prop}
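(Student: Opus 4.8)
The plan is to prove both identities by a direct count of rigid holomorphic polygons, organised around the cocycle relations of Proposition~\ref{prop:cocycle_4punc}. Write $\mathbf{b}=x_1X_1+y_1Y_1+z_1Z_1$ for the immersed deformation of $S_1^x$ and $\mathbf{b}'=y_0Y_0+z_0Z_0$ for that of $C$, with the holonomy of $\nabla^{tT}$ absorbed into the boundary data (each sub-arc of a disk boundary on $C$ acquiring the appropriate power of $t$). Then, for instance, $m_2^{\mathbf{b}',\mathbf{b},\mathbf{b}'}(c_2+d_2,a_1+b_1)$ expands as
\[
\sum_{i,j,k\geq 0}\pm\, m_{2+i+j+k}\bigl(\underbrace{\mathbf{b}',\dots}_{i},\,c_2{+}d_2,\,\underbrace{\mathbf{b},\dots}_{j},\,a_1{+}b_1,\,\underbrace{\mathbf{b}',\dots}_{k}\bigr),
\]
each summand being the signed, area-weighted count of rigid holomorphic polygons whose boundary lies successively on $C$, $S_1^x$, $C$, with corners at $c_2$ or $d_2$, at the inserted generators, at $a_1$ or $b_1$, and at an output generator of the relevant self-Floer complex. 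The first task is to show this series is \emph{finite}: since $S_1^x$ is a small Hamiltonian deformation of $S_1$, the complement of $C\cup S_1^x$ consists of the two ``large'' triangular faces bounded by $C$ alone, together with finitely many thin strips; a rigid polygon can cross each face only boundedly often, so only finitely many insertion patterns contribute.

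Since $a_1+b_1$ and $c_2+d_2$ are degree-zero morphisms, the two products lie in the degree-zero self-Floer complexes $CF^0(S_1^x,S_1^x)=\langle\one_{S_1^x},\bar X_1,\bar Y_1,\bar Z_1\rangle$ and $CF^0(C,C)=\langle\one_C,\bar Y_0,\bar Z_0\rangle$; I treat the two in parallel. By the $A_\infty$-relations together with the cocycle conditions $m_1^{\cdots}(a_1+b_1)=0$ and $m_1^{\cdots}(c_2+d_2)=0$ from Proposition~\ref{prop:cocycle_4punc}, each product is automatically $m_1$-closed; and since $CF^{-1}$ vanishes for both complexes it is pinned down by its finitely many coefficients. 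The coefficient of the unit is computed from the ``small'' polygons with a corner at the fundamental class — the triangles on $a_1,c_2$ and on $b_1,d_2$ visible in Figure~\ref{fig:puncsph-discs}, together with their deformation-decorated companions — and, once $x_1=t$, $y_1=y_0$, $z_1=z_0$ is imposed and signs are fixed by the non-trivial spin structure, these contributions sum to $+1$. The coefficient of each even-degree immersed generator vanishes: the polygons contributing to it occur in cancelling pairs, one carrying a $Y_0$- or $Z_0$-insertion (weight $y_0$ or $z_0$, or a $t$-factor coming from $\nabla^{tT}$) against one carrying a $Y_1$-, $Z_1$- or $X_1$-insertion, the cancellation being forced by exactly the substitution $x_1=t$, $y_1=y_0$, $z_1=z_0$. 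Hence both products equal the respective units.

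The main obstacle is this middle enumeration: one must check simultaneously that the insertion series terminates, that every non-unit output cancels \emph{on the nose} rather than up to an $m_1$-exact remainder, and that the orientation and spin-structure signs are compatible with the areas and $t$-powers, so that the surviving coefficient is $+1$. Invoking the a priori $m_1$-closedness from Proposition~\ref{prop:cocycle_4punc} reduces the problem to a finite sign-bookkeeping exercise — and if one further checks that $\ker\!\bigl(m_1|_{CF^0}\bigr)$ is spanned by the unit alone for generic parameters, it reduces further to computing a single triangle coefficient — but carrying out that bookkeeping consistently is the delicate point.
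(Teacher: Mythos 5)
Your proposal matches the paper's approach: the paper justifies this proposition only by the remark that it is verified ``similarly'' to Proposition \ref{prop:cocycle_4punc}, i.e.\ by a direct signed, area-weighted count of the rigid holomorphic polygons of Figure \ref{fig:puncsph-discs} contributing to the deformed $m_2$ under the substitution $x_1=t$, $y_1=y_0$, $z_1=z_0$, which is exactly the enumeration you set up. Your structural additions (closedness of the products via the $A_\infty$-relations and the cocycle conditions of Proposition \ref{prop:cocycle_4punc}, and the reduction to finitely many coefficients in the degree-zero self-Floer complexes) are correct and consistent with the paper, which likewise defers the remaining sign and finiteness bookkeeping to a separate detailed paper.
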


In the same way we have the gluing map between $(C,\nabla^{t'T'},y'_0Y'_0+z'_0Z'_0)$ and $(S_2^{x},x_2X_2+y_2Y_2+z_2Z_2)$.  

In conclusion, we have
$$ (\C^3,W^{S_1}) \overset{\small\textrm{smoothing}}\longleftarrow (\C^\times \times \C^2,W^{(C,T)}) \overset{\small\textrm{gauge change}}\longleftrightarrow (\C^\times \times \C^2,W^{(C,T')}) \overset{\small\textrm{smoothing}}\longrightarrow (\C^3,W^{S_2}) $$
where the first map is $x_1=t,y_1=y_0,z_1=z_0$, the second map is $t'=t^{-1}$, $y_0' = t^ay_0$ and $z_0'=t^bz_0$, the third map is $x_2=t',y_2=y_0',z_2=z_0'$.  

The maps automatically preserve the superpotentials $W^{S_i}=x_iy_iz_i$, $W^{(C,T)}=ty_0z_0$ and $W^{(C,T')}=t'y'_0z'_0$.
The resulting moduli space is 
$$(\cO(-a)\oplus\cO(-b),W) \textrm{ with }a+b=2 $$
which is the Landau-Ginzburg mirror of the four-punctured sphere.

\subsection{A paradox} \label{sec:paradox} 
We have shown in the previous section that $(S_1^x,x_1X_1+y_1Y_1+z_1Z_1)$ is isomorphic to $(C,\nabla^{tT},y_0Y_0+z_0Z_0)$ by setting $x_1=t,y_1=y_0,z_1=z_0$.  On the other hand, it is not difficult to see that $S_1$ is isomorphic to $S_1^x$, see Figure \ref{fig:S1-S1x} (where $(\alpha,\beta)$ serves as an isomorphism.)  

Thus $(S_1,x_1X_1+y_1Y_1+z_1Z_1)$ is isomorphic to $(C,\nabla^{tT},y_0Y_0+z_0Z_0)$.  But $S_1$ and $C$ never intersect and hence there is no morphism between them, a contradiction!

\begin{figure}[htb!]
    \includegraphics[scale=0.25]{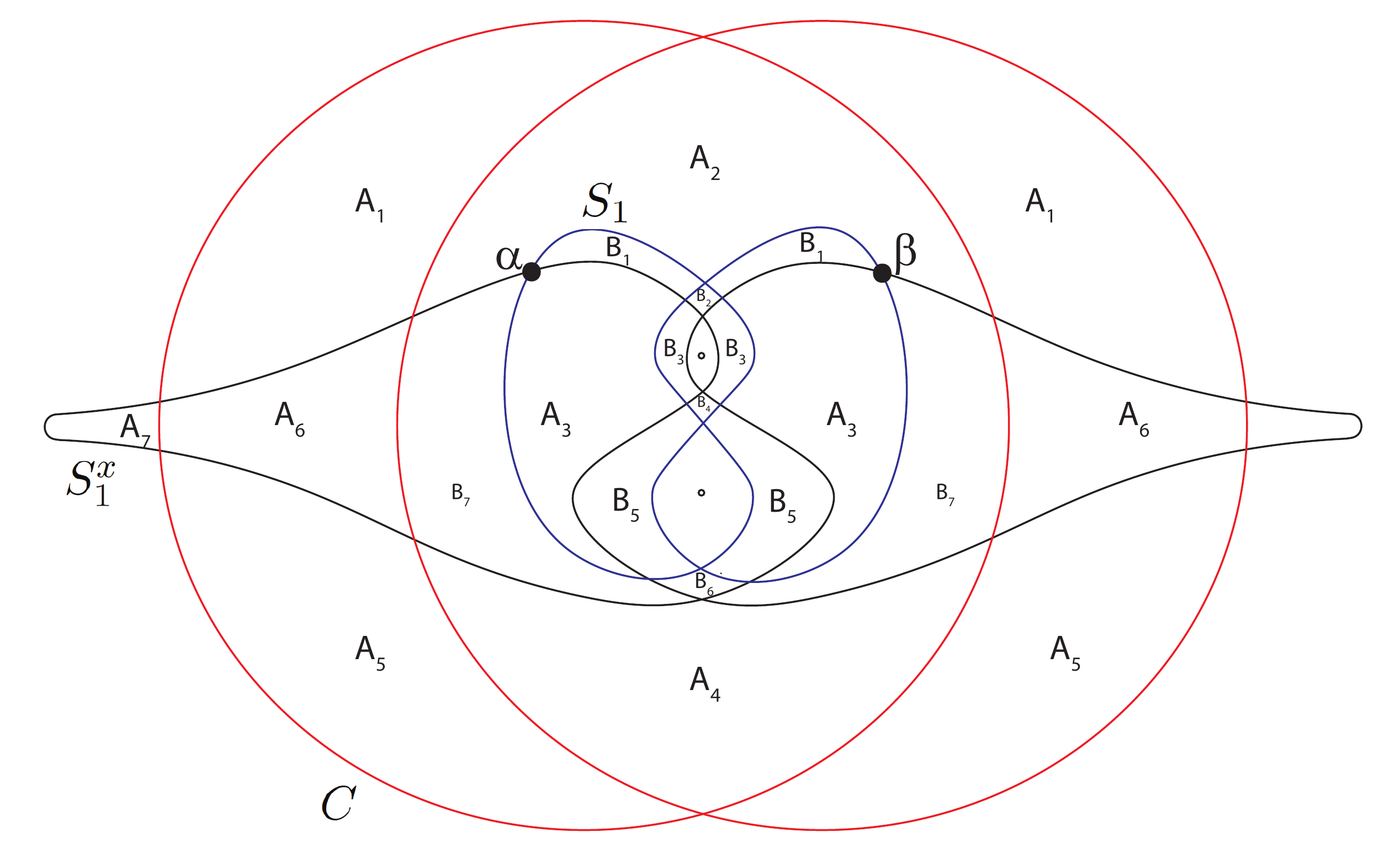}
    \caption{The immersions $S_1$, $S_1^x$, and $C$.  $A_i$ denote the areas bounded by $S_1^x$ and $C$, and $B_i$ denote the areas bounded by $S_1$ and $S_1^x$.  The figure is topological and does not represent the exact areas.}
		\label{fig:S1-S1x}
\end{figure}

To resolve this, we need to take a closer look at areas of holomorphic discs, the relation between $S_1$ and $S_1^x$, and the relation between $S_1^x$ and $C$.
Following Fukaya-Oh-Ohta-Ono, we use the Novikov ring 
$$\Lambda_0 = \left\{\sum_{i=0}^\infty a_i T^{A_i}: a_i \in \C, 0 \leq A_0 \leq A_1 \leq \ldots \textrm{ increasing to } \infty \right\}$$ 
to filter deformations into different energy levels.   Analogously one has the Novikov field $\Lambda$ (where $A_0 \geq 0$ is not required) and $\Lambda_+$ (where $A_0 > 0$ is required).

First let's go back to the relation between $(S_1^x,x_1X_1)$ and $(C,\nabla^{tT})$.  (For simplicity we take $y_1=z_1=y_0=z_0=0$.)  
The following is a more precise version of Proposition \ref{prop:cocycle_4punc}.

\begin{prop}
For a general choice of $S_1^x$, $(a_1+b_1,c_2+d_2)$ provides an isomorphism between $(C,\nabla^{tT})$ and $(S_1^x,x_1X_1)$
if and only if 
\begin{equation} \label{eq:x_1-t}
t=T^{A_1+\ldots+A_5-A_7} x_1
\end{equation}
where $A_i$ denote the areas bounded by $S_1^x$ and $C$ shown in Figure \ref{fig:S1-S1x}.  
\end{prop}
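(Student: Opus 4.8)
The plan is to compute the Floer differential $m_1$ on $CF((C,\nabla^{tT},0),(S_1^x,x_1X_1))$ applied to the putative cocycle $a_1+b_1$, keeping careful track of the energies (powers of $T$) of all contributing holomorphic strips, and then do the same for $c_2+d_2$ on the opposite Floer complex. As in Proposition \ref{prop:cocycle_4punc}, the statement $m_1(a_1+b_1)=0$ and $m_1(c_2+d_2)=0$ reduces to a finite collection of relations among the coefficients $t$, $x_1$ and the disc areas $A_i$; the new feature is that we no longer set the symplectic areas to be equal, so the cancellations that gave $x_1=t$ over $\C$ now produce the weighted relation \eqref{eq:x_1-t}. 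Concretely, I would first enumerate the strips bounded by $C$ and $S_1^x$ through the intersection points $a_i,b_i,c_i,d_i$ — these are exactly the ones depicted in Figure \ref{fig:puncsph-discs} — and record for each one its output generator, its area in terms of the $A_i$ of Figure \ref{fig:S1-S1x}, and whether its boundary passes through the gauge cycle $T$ (contributing a factor of $t^{\pm 1}$) or through the immersed generator $X_1$ of $S_1^x$ (contributing a factor of $x_1$, with the sign from the chosen spin structure).

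Next I would assemble these contributions. Writing $m_1(a_1+b_1)$ as a $\Lambda$-linear combination of output generators, each coefficient is a sum of two monomials of the form $\pm t^{\epsilon}T^{E} $ and $\pm x_1 T^{E'}$ coming from the two strips with that output; requiring the coefficient to vanish forces $t^{\epsilon}T^{E}=x_1T^{E'}$, i.e. (after using $\epsilon=1$ for the relevant strip) $t = T^{E'-E}x_1$. A bookkeeping of which areas bound which strip — the ``$+$'' side of $C$ contributes $A_1+\cdots+A_5$ and the strip through $X_1$ on $S_1^x$ contributes $A_7$ — yields the exponent $A_1+\cdots+A_5-A_7$, which is exactly \eqref{eq:x_1-t}. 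The same computation with $c_2+d_2$ on $CF((S_1^x,x_1X_1),(C,\nabla^{tT},0))$ gives the same relation (the roles of the two Lagrangians are swapped, so the strips are the mirror images and the exponent comes out with the opposite sign on the opposite side, consistently), so no new constraint appears. Finally, granting \eqref{eq:x_1-t}, one checks $m_2(a_1+b_1,c_2+d_2)=\one_{S_1^x}$ and $m_2(c_2+d_2,a_1+b_1)=\one_C$ by counting the triangles contributing to these $m_2$-products exactly as in the $\C$-level proposition following Proposition \ref{prop:cocycle_4punc}; the area factors there combine to cancel against \eqref{eq:x_1-t}, producing the honest units rather than $T$-multiples of them. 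This last step is where the particular weighted form of \eqref{eq:x_1-t} is forced: any other relation between $t$ and $x_1$ would leave a residual power of $T$ in $m_2(a_1+b_1,c_2+d_2)$, obstructing the isomorphism.

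The main obstacle I anticipate is the area bookkeeping: one must pin down a consistent convention for the areas $A_1,\dots,A_7$ of Figure \ref{fig:S1-S1x} (which strips they bound, orientations, and how the gauge cycle $T$ sits relative to them) and then verify that every strip appearing in $m_1$ and $m_2$ has the area claimed, with the correct sign from the non-trivial spin structure on $C$ and on $S_1^x$. There is also a convergence point to address, since for a ``general'' choice of $S_1^x$ the exponent $A_1+\cdots+A_5-A_7$ need not be positive — so $t$ and $x_1$ are genuinely related by a possibly-negative power of $T$, and this is precisely why the identification is a \emph{pseudo}-isomorphism in the sense of Definition \ref{def:piso} rather than an honest quasi-isomorphism, a point to be flagged explicitly. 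Once the areas are fixed, the rest is a direct strip count with no conceptual difficulty.
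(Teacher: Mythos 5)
Your strip-count-with-areas argument is exactly the paper's route: this proposition is stated as the area-weighted refinement of Proposition \ref{prop:cocycle_4punc}, which the paper obtains by ``direct counting of strips'' as in Figure \ref{fig:puncsph-discs}, tracking for each strip its area in the $A_i$, its holonomy factor $t^{\pm1}$ from the gauge cycle, and its factor of $x_1$ from the immersed corner. One small correction to your closing remark: a negative exponent $A_1+\cdots+A_5-A_7$ is the \emph{favorable} case rather than the pseudo-isomorphism case --- the paper's subsequent proposition shows an honest isomorphism exists precisely when $A_1+\cdots+A_5\le A_7$, since one can then take $\mathrm{val}(x_1)=A_7-(A_1+\cdots+A_5)\ge 0$ to keep $\mathrm{val}(t)=0$; it is a positive exponent that forces $\mathrm{val}(t)>0$ and rules out a flat $\Lambda_{\mathrm{val}=0}$-connection on $C$.
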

In Proposition \ref{prop:cocycle_4punc} we take $A_1+\ldots+A_5=A_7$.

Before we have restricted $x_1 \in \C$ for simplicity.  Indeed we can take $x_1 \in \Lambda_0$, which makes the deformation space of $S_1^x$ much bigger.  

For the Lagrangian $C$, since $\nabla^{tT}$ is a flat connection, we can only allow $t \in \Lambda_{\mathrm{val}=0}= \C^\times \oplus \Lambda_+$ in order to have the Fukaya category well-defined.  Thus there is an overlapping region for 
Equation \eqref{eq:x_1-t} if and only if 
$$A_1+\ldots+A_5 \leq A_7.$$

Thus for arbitrary $S_1^x$ the isomorphism may not exist.  (For instance it does not exist for $S_1$.)
The following proposition gives a summary of the area criterion.

\begin{prop}
There exists $x \in \lambda_0$ and $t \in \Lambda_{\mathrm{val}=0}$ such that $(S_1^x,x_1X_1)$ is isomorphic to $(C,\nabla^{tT})$ if and only if $A_1+\ldots+A_5 \leq A_7$.  
\end{prop}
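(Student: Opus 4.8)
The plan is to reduce the proposition to the refined form of Proposition~\ref{prop:cocycle_4punc} established just above it, and then to solve an equation over the Novikov ring. Set $E := A_1 + \cdots + A_5 - A_7$; that refined proposition says the explicit pair $(a_1+b_1,\,c_2+d_2)$ is an isomorphism between $(C,\nabla^{tT})$ and $(S_1^x,x_1X_1)$ precisely when $t = T^{E}x_1$. So the problem reduces to two questions: for which sign of $E$ is the equation $t = T^{E}x_1$ solvable with $x_1 \in \Lambda_0 \setminus \{0\}$ and $t \in \Lambda_{\mathrm{val}=0}$, and does the existence of \emph{some} isomorphism force lying on this locus?

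For the ``if'' direction I would argue as follows. Assume $A_1 + \cdots + A_5 \leq A_7$, i.e.\ $E \leq 0$. Then $x_1 := T^{-E}$ is a nonzero element of $\Lambda_0$ (because $-E \geq 0$) and $t := T^{E}x_1 = 1$ lies in $\C^\times \subset \Lambda_{\mathrm{val}=0}$; since $t = T^{E}x_1$, the refined proposition produces the isomorphism. (Any $x_1 \in \Lambda_0$ with $\mathrm{val}(x_1) = -E$ works equally well, with $t := T^{E}x_1$ then automatically of valuation $0$.)

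For the ``only if'' direction I would first reduce to the claim that any isomorphism between $(C,\nabla^{tT})$ and $(S_1^x,x_1X_1)$ forces the relation $t = T^{E}x_1$; granting this, $0 = \mathrm{val}(t) = E + \mathrm{val}(x_1) \geq E$ since $\mathrm{val}(x_1)\geq 0$, so $E \leq 0$, which is the asserted inequality. To prove the claim, the idea is to compute the Floer cohomology $HF\big((C,\nabla^{tT}),(S_1^x,x_1X_1)\big)$ directly from the holomorphic strips of Figure~\ref{fig:puncsph-discs}: the Floer differential on the eight-dimensional complex spanned by $a_1,b_1,c_1,d_1,a_2,b_2,c_2,d_2$ has matrix entries that are $\Lambda$-monomials in $t$, $x_1$ and $T$ (the exponents of $T$ being among the $A_i$), and a direct linear-algebra computation shows this complex is acyclic — hence the two objects are not isomorphic — off the locus $\{t = T^{E}x_1\}$. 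Since an isomorphism requires nonvanishing Floer cohomology, we must lie on that locus.

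The hard part is exactly this last rank computation: enumerating all holomorphic strips bounded by $C$ and $S_1^x$, recording their areas and orientation/spin signs, and verifying that the resulting $8\times 8$ block differential degenerates precisely along $t = T^{E}x_1$. Once that is in hand, both implications of the proposition are immediate from the elementary valuation bookkeeping above.
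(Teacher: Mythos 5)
Your proposal is correct and follows essentially the same route as the paper: the paper likewise reads off the criterion $t = T^{A_1+\cdots+A_5-A_7}x_1$ from the refined version of Proposition~\ref{prop:cocycle_4punc} and observes that the constraints $\mathrm{val}(x_1)\ge 0$ and $\mathrm{val}(t)=0$ are simultaneously satisfiable exactly when $A_1+\cdots+A_5\le A_7$. The only difference is that you explicitly flag (without carrying out) the strip-counting argument needed to show that \emph{no} isomorphism exists off the locus $t=T^{E}x_1$; the paper also leaves this implicit, taking the refined proposition as the exact isomorphism criterion.
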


Now consider the relation between $S_1$ and $S_1^x$.  We have the following.

\begin{prop} \label{prop:S1-S1x}
$(\alpha,\beta)$ provides an isomorphism between $(S_1^x,x_1X_1)$ and $(S_1,x_1'X_1')$ if and only if
\begin{equation} \label{eq:x1-x1'}
x_1=T^{B_7+B_6+B_5-(B_3+B_2+B_1)} x_1'
\end{equation}
where $B_i$ denote the areas bounded by $S_1^x$ and $S_1$ shown in Figure \ref{fig:S1-S1x}.  
\end{prop}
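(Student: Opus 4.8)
The plan is to compute the deformed Floer differential $m_1^{x_1X_1,\,x_1'X_1'}$ and the deformed products $m_2^{x_1X_1,\,x_1'X_1',\,x_1X_1}$, $m_2^{x_1'X_1',\,x_1X_1,\,x_1'X_1'}$ on the Floer complexes $CF((S_1^x,x_1X_1),(S_1,x_1'X_1'))$ and $CF((S_1,x_1'X_1'),(S_1^x,x_1X_1))$ directly, by counting holomorphic strips and triangles, exactly in the spirit of Proposition \ref{prop:cocycle_4punc} and the preceding proposition relating $(S_1^x,x_1X_1)$ to $(C,\nabla^{tT})$. Since $S_1$ and $S_1^x$ differ by a small Hamiltonian isotopy, their undeformed Floer theory is transparent: $\alpha$ and $\beta$ are the degree-$0$ generators furnishing mutually inverse isomorphisms of the \emph{undeformed} objects, and the only rigid polygons contributing to the operations in question are the finitely many visible ones in Figure \ref{fig:S1-S1x}. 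The content of the proposition is that turning on the immersed deformations preserves this isomorphism precisely when $X_1$ on $S_1^x$ is transported to $T^{B_7+B_6+B_5-(B_3+B_2+B_1)}X_1'$ on $S_1$.

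First I would fix the complex: list the intersection points of $S_1^x$ with $S_1$, record their degrees, and isolate $\alpha\in CF(S_1^x,S_1)$, $\beta\in CF(S_1,S_1^x)$ in degree $0$. Next I would enumerate the rigid strips contributing to $m_1^{x_1X_1,\,x_1'X_1'}(\alpha)$. Two of them land on a common output generator: one whose boundary runs along $S_1$ and makes a single jump at the self-intersection point, carrying one $X_1'$-insertion, of total area $B_1+B_2+B_3$ and weight $\pm x_1'$; and one whose boundary runs along $S_1^x$, carrying one $X_1$-insertion, of total area $B_5+B_6+B_7$ and weight $\pm x_1$. With the non-trivial spin structure the two signs are opposite, so the differential vanishes on this output exactly when $T^{B_5+B_6+B_7}x_1 = T^{B_1+B_2+B_3}x_1'$, i.e. when \eqref{eq:x1-x1'} holds; one then checks that every remaining strip either has strictly larger energy and reassembles into $m_0$-proportional terms that already cancel, or contributes nothing. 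A mirror-image count gives $m_1^{x_1'X_1',\,x_1X_1}(\beta)=0$ under the same relation.

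Assuming \eqref{eq:x1-x1'}, the remaining step is to verify $m_2(\alpha,\beta)=\one_{S_1^x}$ and $m_2(\beta,\alpha)=\one_{S_1}$ (so that $(\alpha,\beta)$ is a quasi-isomorphism, and in fact a strict one for a suitably chosen $S_1^x$). Here I would count the holomorphic triangles with two sides on $\alpha,\beta$ outputting the fundamental class: the small triangle produced by the isotopy contributes $\one$, while the extra triangles carrying $X_1$- or $X_1'$-insertions either cancel in pairs by the spin-structure signs or combine, using \eqref{eq:x1-x1'}, into terms proportional to $\one_{S_1^x}$ that sum to zero. Combining the three computations, $(\alpha,\beta)$ is an isomorphism iff $m_1^{x_1X_1,\,x_1'X_1'}(\alpha)=m_1^{x_1'X_1',\,x_1X_1}(\beta)=0$ and the two $m_2$-products give the units, which by the above holds iff \eqref{eq:x1-x1'}. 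The converse is immediate: if \eqref{eq:x1-x1'} fails then $m_1^{x_1X_1,\,x_1'X_1'}(\alpha)\neq 0$, so $\alpha$ is not even a cocycle and cannot be a (quasi-)isomorphism.

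The main obstacle is the holomorphic-polygon bookkeeping: classifying all rigid strips and triangles in the region swept between $S_1$ and $S_1^x$ (confirming that only the polygons depicted in Figure \ref{fig:S1-S1x} are rigid, with areas exactly the labeled $B_i$), and pinning down every orientation/spin sign so that the unwanted contributions genuinely cancel and the surviving ones produce the exponent $B_7+B_6+B_5-(B_3+B_2+B_1)$ with the correct sign. As a consistency check one can compose the isomorphism $(S_1^x,x_1X_1)\cong(C,\nabla^{tT})$ of the previous proposition with the analogous identification obtained by smoothing $S_1$ directly, and verify that the two routes from $(S_1,x_1'X_1')$ to $(C,\cdot)$ agree; this recovers \eqref{eq:x1-x1'} and gives an independent check on the area exponents.
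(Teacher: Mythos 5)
Your method---enumerate the rigid strips between $S_1$ and $S_1^x$, impose the cocycle condition $m_1^{x_1X_1,\,x_1'X_1'}(\alpha)=0$ as a cancellation between the strip carrying an $X_1$-corner and the strip carrying an $X_1'$-corner, then verify $m_2(\alpha,\beta)=\one$---is exactly the paper's approach (the paper proves Proposition \ref{prop:cocycle_4punc} and its refinements by the same ``direct counting of strips,'' and defers the bookkeeping to the figure). Your closing consistency check, composing with the isomorphism to $(C,\nabla^{tT})$ and comparing the two routes, is also in the spirit of how the paper pins down coordinate changes elsewhere (cf.\ the lemma computing $H(uv)$ in Section \ref{sec:2dim}).

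However, your area bookkeeping produces the reciprocal of the stated relation. You assign area $B_1+B_2+B_3$ to the strip weighted by $x_1'$ and area $B_5+B_6+B_7$ to the strip weighted by $x_1$; the cancellation $T^{B_5+B_6+B_7}x_1=T^{B_1+B_2+B_3}x_1'$ then gives $x_1=T^{(B_1+B_2+B_3)-(B_5+B_6+B_7)}x_1'$, whereas Equation \eqref{eq:x1-x1'} asserts $x_1=T^{(B_5+B_6+B_7)-(B_1+B_2+B_3)}x_1'$. This is not a cosmetic issue: the paper immediately uses $B_7+B_6+B_5-(B_3+B_2+B_1)>0$ to conclude $\mathrm{val}(x_1)>0$ and thereby resolve the paradox of Section \ref{sec:paradox}, so the sign of the exponent carries the whole point. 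The correct assignment (readable from Figure \ref{fig:S1-S1x}) is the opposite of yours: the strip passing through the immersed point of $S_1^x$ (hence weighted by $x_1$) has area $B_1+B_2+B_3$, and the strip picking up $x_1'$ has area $B_5+B_6+B_7$. Since you wrote this blind, the error is understandable, but as stated your computation contradicts the proposition rather than proving it; the fix is to swap the two area labels and rerun the same cancellation.
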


Let's go back to the situation of Proposition \ref{prop:cocycle_4punc} where $S_1^x$ is chosen such that $A_1+\ldots+A_5=A_7$ and hence $x_1=t$.  Note that $B_7 > A_7$ and $B_1+B_2+B_3 < A_3 + A_5$.  Hence $B_7+B_6+B_5-(B_3+B_2+B_1) > 0$.  Then Equation \eqref{eq:x1-x1'} forces $\mathrm{val}(x_1) > 0$.  Thus $(S_1^x,x_1X_1)$ is isomorphic to some $(S_1,x_1'X_1')$ only if $\mathrm{val}(x_1) > 0$.  On the other hand, $(S_1^x,x_1X_1)$ is isomorphic to some $(C,\nabla^{tT})$ only if $\mathrm{val}(x_1) = 0$.

The two regions $\mathrm{val}(x_1) = 0$ and $\mathrm{val}(x_1) > 0$ are disjoint.  As a result, there is no overlapping region where $(S_1,x_1'X_1') \cong (S_1^x,x_1X_1) \cong (C,\nabla^{tT})$ simultaneously.  This resolves the paradox.

\subsection{Moduli over the Novikov field} \label{sec:Novikov}

For the four-punctured sphere with two pair-of-pants in the decomposition, we can choose $S_1^x$ and $S_2^x$ as above in the two pair-of-pants such that the gluing $x_1=t$ and $x_2=t'$ does not involve the Novikov parameter $T$.  

However suppose there is one more pair-of-pants in the decomposition.  Then there are two `necks' containing the exact double-circles $C_1$ and $C_2$.  In the pair-of-pants adjacent to both necks, we have a Seidel Lagrangian $S_1^x$ which satisfies $(S_1^x,x_1X_1) \cong (C_1,\nabla^{tT})$ under $x_1=t \in \Lambda_{\mathrm{val}=0}$, and another Seidel Lagrangian $S_1^y$ which satisfies $(S_1^y,y_1' Y_1') \cong (C_2,\nabla^{t'T'})$ under $y_1'=t' \in \Lambda_{\mathrm{val}=0}$.

Unavoidably the gluing between deformation spaces of $S_1$ and $S_1'$ involves the Novikov parameter as in Proposition \ref{prop:S1-S1x}.  Thus we obtain a moduli space over the Novikov field $\Lambda$.  Figure \ref{fig:Lambda-mod} gives a schematic picture of the moduli space.

\begin{figure}[htb!]
    \includegraphics[scale=0.25]{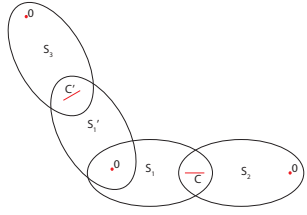}
    \caption{The moduli space over $\Lambda$.}
		\label{fig:Lambda-mod}
\end{figure}



\subsection{Changing stability and flop}
Let's briefly discuss stability conditions.
Given a punctured Riemann surface, we can take different choices of pair-of-pants decompositions.  They correspond to different choices of quadratic differentials.  Then we have different set of stable Lagrangians; the corresponding moduli spaces are related by birational changes.

In this article we only consider geometric stability (rather than categorical stability) which is defined by a quadratic top-form (namely a section of $(\bigwedge^n T^*)^{\otimes 2}$).  We define a special Lagrangian (of phase zero) as a smooth map $\iota: \hat{L}^n \to M^{2n}$ whose differential is injective everywhere except at zeros of the quadratic top-form, with $\iota^* \omega = 0$ and $\iota^* \mathrm{Re} \Omega = 0$.  By a domain-dependent Hamiltonian isotopy the map $\iota$ is made to avoid the zeros of the quadratic top-form such that its differential is everywhere injective, and it only self-intersects at transverse points.  Then any Lagrangian which is quasi-isomorphic to such $\iota$ is said to be stable.

Take the four-punctured sphere as an example.  Figure \ref{fig:flop} depicts two different pair-of-pants decompositions and the corresponding stable Lagrangians.  The corresponding moduli spaces are related by a flop.  

\begin{figure}[htb!]
    \includegraphics[scale=0.35]{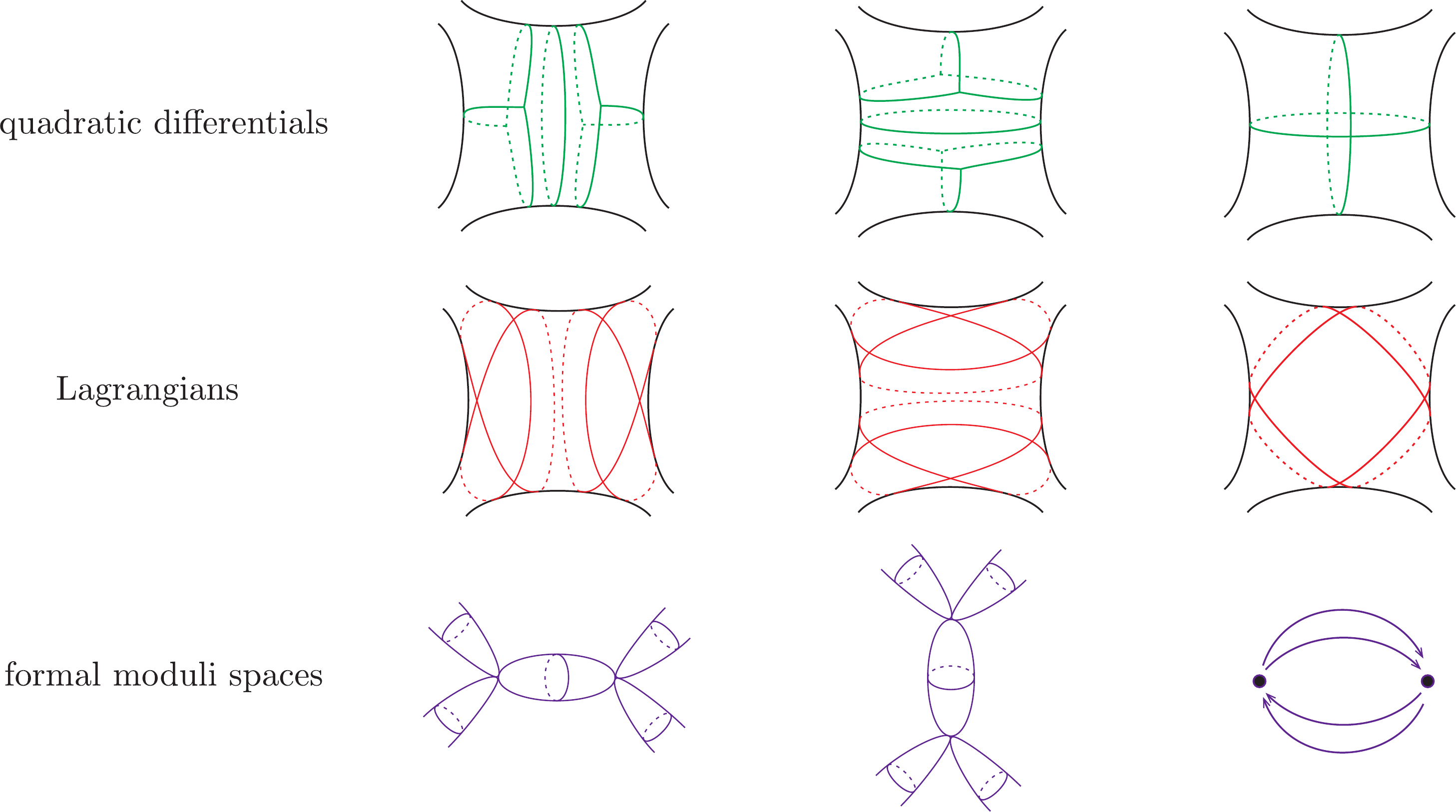}
    \caption{Different choices of quadratic differentials leads to different moduli related by flops.}
		\label{fig:flop}
\end{figure}

We can even merge two simple zeros into a double zero of the quadratic differential.  The corresponding flow trajectories and stable Lagrangians are depicted on the right of Figure \ref{fig:flop}.  We have a union of two circles playing the role of the Seidel Lagrangian.  It was studied in our previous work \cite{CHL2}.  The resulting moduli space is the non-commutative resolution of the conifold corresponding to a quiver (together with a superpotential).  

In \cite{FHLY} we studied ``flop" on a Lagrangian fibration for $T^* \bS^3$, which results in the Atiyah flop on the mirror resolved conifold.  If we take the union of two certain $S^3$'s in $T^* \bS^3$ as a reference Lagrangian, then we produced the non-commutative resolution of the conifold.  
It is a three-dimensional analog of the above example.

Stability conditions for punctured Riemann surfaces were studied by Haiden-Katzarkov-Kontsevich \cite{HKK}.  We wish to understand the relation with the work of \cite{HKK} later.

\section{Aplication II: immersed two-sphere and wall-crossing} \label{sec:2dim}
The two-dimensional immersed sphere given in (a) of Figure \ref{fig:mirror_glue} leads to wall-crossing phenomenons for SYZ Lagrangian fibrations.
Fukaya studied this immersion and the relation with the Gross-Siebert program in his talks.  

Here we use the gluing method introduced before to understand the relations between the local deformations of a Chekanov torus, the immersed sphere, and a Clifford torus.  The gluing between Chekanov and Clifford tori was computed in Seidel's lecture notes \cite[Prop. 11.8]{Seidel-lect} and is a special case of Pascaleff-Tonkonog \cite{PT}.  Thus we will focus on the gluing between the tori and the immersed sphere.  

One important thing is, we need to deform the Chekanov and Clifford fibers to make all three intersecting with each other appropriately.  Otherwise the fibers (of a Lagrangian fibration) do not intersect with each other and there is no morphism between them.

The immersed sphere has exactly one transverse immersed point, which gives two degree-one generators $U$ and $V$.  $U$ and $V$ can be understood as jumpings from one branch to the other, see the arrows in Figure \ref{fig:gluing}.  We work with Morse model and pearl trajectories to compute the morphisms between the immersed sphere and the tori.

\subsection{A local model}

Consider the Hamiltonian $S^1$-action on $\C^2$ defined by $(a,b) \mapsto (e^{i\theta} a, e^{-i\theta} b)$.  The moment map for this $S^1$-action is given by $(a,b) \mapsto |a|^2 - |b|^2.$  The symplectic reduction map is $f: \C^2 \to \C$ given by $f(a,b)=ab$.

We have the Lagrangian fibration $(|ab-\epsilon|,|a|^2 - |b|^2)$ on $\C^2 - \{ab = \epsilon\}$ for a fixed $\epsilon > 0$.  It is special with respect to the holomorphic volume form $\Omega = \frac{da \wedge db}{ab-\epsilon}$.  The fibers are denoted as $T_{r,\lambda}$ for $r > 0$ and $\lambda \in \R$.  SYZ and the wall-crossing phenomenon for this example was studied by Auroux \cite{Auroux07}, which serves as the basis of SYZ for more general cases \cite{Auroux09, CLL,AAK}.

There is exactly one singular fiber $\BL=T_{\epsilon,0}$, which is the two-dimensional immersed sphere given in (a) of Figure \ref{fig:mirror_glue}.  We will focus on this local model in this section.  The most generic wall-crossing in higher dimensions can be understood by considering the product of this Lagrangian fibration with the trivial torus bundle $(\C^\times)^k \to \R^k$.

\subsection{Deformation spaces}
Let $\BL_1$, $\BL_2 \subset \C^2 - \{ab = \epsilon\}$ be the Lagrangian tori which are the intersections of $\{|a|^2=|b|^2\}$ and the $f$-preimages of the circles in Figure \ref{fig:gluing}.  They are Lagrangian isotopic to a Chekanov fiber $T_{r_1,\lambda=0}$ and a Clifford fiber $T_{r_2,\lambda=0}$ respectively where $r_1<\epsilon<r_2$.

We deform $\BL_i$ (for $i=1,2$) formally by flat $\Lambda_0^\times = \C^\times \oplus \Lambda_+$ connections.  (We can allow flat $\Lambda_0^\times$ connections since it does not harm the convergence of $m_k$-operations.  Their inverses live in the same space $\C^\times \oplus \Lambda_+$, and hence no negative power of $T$ appears in $m_k$-operations.)  The connections are parametrized by $(x,y)$ and $(x',y')$ for $\BL_1,\BL_2$ respectively.

We fix the gauge by choosing gauge hyper-tori as depicted in Figure \ref{fig:gluing}.  The parallel transport is given by multiplying $y$ (for a path in $\BL_1$) or $y'$ (for a path in $\BL_2$) when a path goes across the corresponding hyper-torus.  For the fiber direction holonomy ($x$ or $x'$), the associated hyper-tori are given in Figure \ref{fig:dalpha} for $x$).  (See \cite{CHLtoric} for more details for gauge hypertori.)

\begin{figure}[h]
\begin{center}
\includegraphics[scale=0.4]{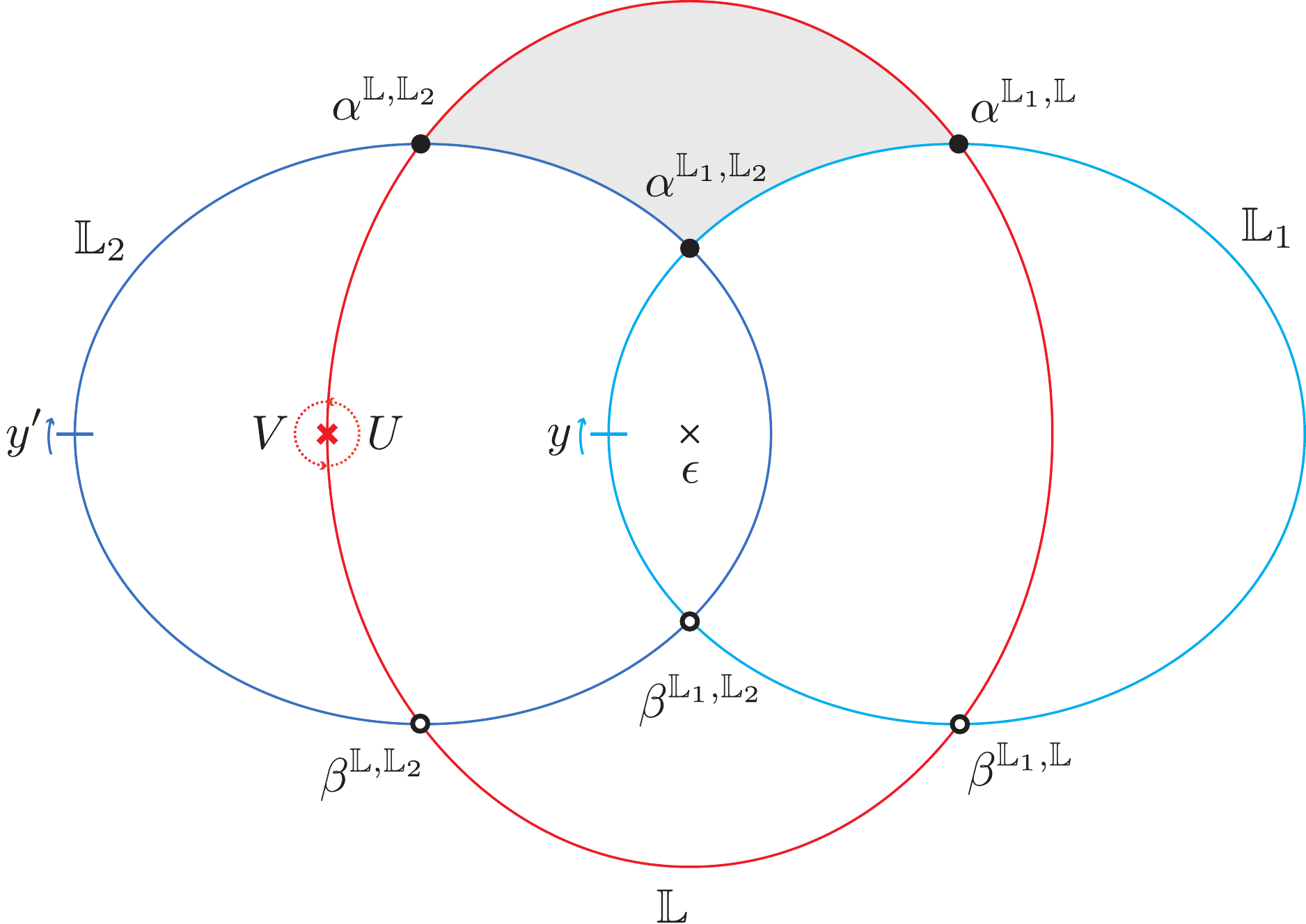}
\caption{Base paths for $\BL_1$,$\BL_2$ and $\BL$}
\label{fig:gluing}
\end{center}
\end{figure}

Now consider the formal deformations $b = uU + vV$ for the immersed sphere $\BL$, where $U,V$ are the degree-one immersed generators.  (See \cite{CHL} or \cite{CHL2} for more details for formal deformation spaces of Lagrangian immersions).  $u,v \in \Lambda_+$ ensures Novikov convergence, namely there are only finitely many terms under every energy level.  First we observe that we can allow $(u,v) \in \Lambda_0 \times \Lambda_+ \cup \Lambda_+ \times \Lambda_0$.  It is because $\BL$ only bounds constant holomorphic polygons whose corners must be in alternating pattern $U,V,\ldots$. Observe that $uv$ still lies in $\Lambda_+$ even if we allow one of $u,v$ in $\Lambda_0$.  Thus we have the following.

\begin{lemma}
The $A_\infty$ operations for $\BL^{uU + vV}$ and other Lagrangians converge for $(u,v) \in \Lambda_0 \times \Lambda_+ \cup \Lambda_+ \times \Lambda_0$.
\end{lemma}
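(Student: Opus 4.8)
The plan is to bound, term by term, the Novikov valuation of every contribution to a deformed operation $m_k^{b,\ldots,b}$ (and more generally $m_k^{\mathbf{b}_0,\ldots,\mathbf{b}_k}$ with $\mathbf{b}_i$ either $b$ or the deformation of another Lagrangian) and to show this valuation grows without bound as the number $l$ of inserted copies of $b = uU + vV$ increases; Novikov convergence is then immediate. With the back-extraction convention fixed in the paper, a copy of $b$ sitting at a corner of a contributing holomorphic polygon contributes the scalar $u$ if that corner is read as $U$ and $v$ if it is read as $V$.

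First I would isolate the one piece of genuine geometry. For the immersed sphere $\BL = T_{\epsilon,0}$, the only holomorphic polygons with all of their boundary on $\BL$ are the constant ones at the single immersed point (as computed in \cite{CHL,CHL2}), and at that point the two local branches of $\BL$ meet transversally, so travelling along any boundary arc lying on $\BL$ the branch must switch at every corner. Hence, reading corners as immersed generators, they alternate $U,V,U,V,\ldots$; in particular, along any single maximal arc of a (possibly non-constant, possibly mixed) holomorphic polygon lying on $\BL$ the number of $U$-corners and the number of $V$-corners differ by at most one.

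Next I would do the bookkeeping. Fix $k$ and consider a term of $m_k^{b,\ldots,b}(x_1,\ldots,x_k)$ arising from a $(k+l+1)$-gon $\beta$ carrying $l$ inserted copies of $b$; these sit at corners at the immersed point of $\BL$, and grouping consecutive such corners produces the $\BL$-arcs of $\beta$. I claim their number is bounded by a constant $N(k)$ independent of $l$: distinct $\BL$-arcs are separated by boundary segments lying on other Lagrangians, and each such segment is adjacent to one of the $k+1$ genuine (input or output) corners, so $N(k)=2(k+1)$ works. By the alternating property above, the $l = p+q$ insertions split into $p$ read as $U$ and $q$ read as $V$ with $|p-q|\le N(k)$, hence $\min(p,q)\ge\frac{l-N(k)}{2}$. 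For $(u,v)\in\Lambda_0\times\Lambda_+$ one has $\mathrm{val}(u)\ge 0$, $\mathrm{val}(v)>0$, so $\mathrm{val}(uv)>0$ and
\[
\mathrm{val}(u^p v^q) = p\,\mathrm{val}(u) + q\,\mathrm{val}(v) \ \ge\ \min(p,q)\,\mathrm{val}(uv) \ \ge\ \frac{l-N(k)}{2}\,\mathrm{val}(uv),
\]
and adding the non-negative area contribution $\omega(\beta)$ bounds the valuation of the whole term below by $\omega(\beta)+\frac{l-N(k)}{2}\,\mathrm{val}(uv)$. Convergence follows: below any valuation $C$ this estimate forces $l\le N(k)+2C/\mathrm{val}(uv)$ and $\omega(\beta)\le C$, and for bounded $l$ (hence a bounded number of corners) and bounded area, Gromov compactness in the Akaho--Joyce framework \cite{AJ} leaves only finitely many polygon classes $\beta$, each with compact moduli and hence finitely many terms. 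The same estimate applies to operations mixing $\BL^{uU+vV}$ with other Lagrangians, since $b$ is still inserted only at the immersed point and the alternating property is local to each $\BL$-arc, and the case $\Lambda_+\times\Lambda_0$ is symmetric.

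I expect the only real obstacle to be the first step — confirming that every holomorphic polygon relevant to these operations has its corners at the immersed point in the alternating pattern (equivalently, that $\BL$ bounds only constant polygons among those supported there); once this is granted the valuation bookkeeping is routine. It is also worth recording that the hypothesis is sharp: if both $u$ and $v$ were allowed to lie in $\Lambda_0$, then $\mathrm{val}(uv)$ could vanish and the lower bound above would no longer grow with $l$, so the argument — and in fact the convergence itself — would break down.
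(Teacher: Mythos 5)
Your overall strategy is the same as the paper's: convergence rests on the geometric fact that $\BL$ bounds only constant holomorphic polygons, whose corners at the double point alternate between $U$ and $V$, so that insertions of $b$ contribute powers of $uv \in \Lambda_+$; your valuation estimate $\mathrm{val}(u^pv^q)\ge \min(p,q)\,\mathrm{val}(uv)$, the bound $N(k)$ on the number of maximal $\BL$-arcs, and the appeal to Gromov compactness are the right way to make the paper's two-sentence justification quantitative.

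The gap is exactly at the step you flagged, and it is a genuine one: alternation of corners along a maximal $\BL$-arc of a \emph{mixed} polygon does not follow from ``the branch must switch at every corner.'' Branch-switching at a corner only constrains the generator there in terms of which preimage of the double point the two adjacent boundary segments lift to in the normalization $S^2 \to \BL$. If the segment between two consecutive corners is constant, it stays at one preimage and the next corner is forced to be the opposite generator --- this is the paper's situation, and for a closed constant polygon it in fact forces $p=q$ exactly, since the boundary must return to the branch on which it started. But a non-constant segment of a mixed polygon may lift to a path running from one preimage of the double point to the other, after which two consecutive $U$-corners (or $V$-corners) are perfectly consistent with branch-switching; your bound $|p-q|\le 1$ per arc then fails and the lower bound on $\min(p,q)$ collapses, which is fatal precisely when the $\Lambda_0$-valued variable has valuation zero. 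To close this you must either rule out (or uniformly bound the number of) non-constant boundary segments on $\BL$ between consecutive corners, or work as the paper does in the Morse/pearl model, where all but a boundedly many insertions of $b$ are carried by constant disc bubbles attached along flow lines in the normalization of $\BL$ --- and for those the equal-count statement $p=q$, hence a clean power of $uv\in\Lambda_+$, is automatic.
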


Next, we observe that $\BL$ is unobstructed, namely \begin{equation}\label{eqn:mcfirst}
m_0^b = m_0(1) + m_1(b) + m_2(b,b) + \cdots
\end{equation}
vanishes.  $m_0^b$ has degree $2$ which must be proportional to the point class.  It is merely contributed by constant polygons supported at the self intersection point of $\BL$ whose corners are $U,V,\ldots,U,V$ or $V,U,\ldots,V,U$.  The constant polygons with corners $U,V,\ldots,U,V$ cancel with that with $V,U,\ldots,V,U$ and so $m_0^b=0$.

\begin{lemma}
The formal deformation $b=uU + vV$ of $\BL$ is unobstructed.
\end{lemma}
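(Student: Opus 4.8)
The plan is to prove $m_0^b=0$ on the nose. First, since $\BL$ is the image of an immersed $S^2$ with a single transverse double point, its Floer complex is $CF(\BL,\BL)=\Lambda\langle \one_{\BL},U,V,\pt\rangle$ with $U,V$ in degree one (as recorded above), so the only even nonzero part besides $\Lambda\,\one_{\BL}$ is $CF^2(\BL,\BL)=\Lambda\cdot\pt$. The curvature $m_0^b$ of the deformed $A_\infty$-algebra has degree $2$, hence $m_0^b=W(u,v)\cdot\pt$ for a single power series $W(u,v)$; in particular it cannot be a nonzero multiple of $\one_{\BL}$, so here unobstructedness coincides with weak unobstructedness with trivial potential, and it suffices to show $W\equiv 0$. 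Moreover, by the preceding convergence lemma, $W(u,v)$ lies in $\Lambda_+$: every monomial that can occur carries a factor $uv$, which sits in $\Lambda_+$ even when one of $u,v$ is taken in $\Lambda_0$.

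Second, I would identify which configurations contribute to $W$. Expanding $m_0^b=\sum_{k\ge 0}m_k(b,\dots,b)$ and pulling coefficients out from the back, the coefficient of $\pt$ is a weighted count of rigid holomorphic polygons bounded by $\BL$ whose corners are all immersed generators $U$ or $V$ and whose output is the point class. Here one invokes the disk classification in the local model $\C^2\setminus\{ab=\epsilon\}$ already used above: $\BL$ bounds only constant polygons, supported at the unique self-intersection point, and their corners must alternate between $U$ and $V$, since $U$ records the jump from one local branch of $\BL$ to the other and $V$ the jump back. Thus a polygon that closes up has an even number $2m$ of corners, read cyclically as either $UV\cdots UV$ or $VU\cdots VU$; for each $m\ge 1$ there is exactly one polygon of each type, each contributing a term $\pm c_m\,(uv)^m\,\pt$ — the same monomial $(uv)^m$ in both cases, because $u,v$ commute as elements of $\Lambda$ — while the $m=0$ term $m_0(1)$ vanishes as there are no non-constant disks.

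Third, and this is the only substantive point, I would show that for each $m\ge 1$ the two polygons of type $2m$ carry opposite orientation signs and therefore cancel. This is the same mechanism as in the Seidel–Lagrangian lemma of Section \ref{subsec:expop}: after fixing the non-trivial spin structure on $\BL$, the induced orientation of the moduli space of a constant $2m$-gon at the immersed point reverses when the cyclic order of the corners is reversed, so the two contributions $+c_m(uv)^m\,\pt$ and $-c_m(uv)^m\,\pt$ add to zero. Summing over $m$ gives $W\equiv 0$, hence $m_0^b=0$, and $(\BL,b)$ is unobstructed for all admissible $(u,v)$.

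The step I expect to be the main obstacle is the sign bookkeeping in the last paragraph: orienting the moduli spaces of constant polygons at a transverse immersed point and checking that exchanging the corner word $UV\cdots UV \leftrightarrow VU\cdots VU$ flips the sign uniformly in $m$, given the chosen non-trivial relative spin structure. The degree computation and the reduction to constant, alternating polygons are routine once the local disk classification in $\C^2\setminus\{ab=\epsilon\}$ is granted; what remains is a finite orientation check entirely parallel to the one already carried out for the pair-of-pants.
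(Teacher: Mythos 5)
Your proposal is correct and follows essentially the same route as the paper: the degree argument forces $m_0^b$ to be a multiple of $\pt$, only constant polygons at the unique self-intersection point with alternating corners can contribute, and the $U,V,\ldots,U,V$ configurations cancel against the $V,U,\ldots,V,U$ ones sign-wise (as in the paper's sample computation $m_2(uU,vV)=vu[\pt]$, $m_2(vV,uU)=-uv[\pt]$, using commutativity of the Novikov-valued coefficients). The only difference is that you spell out the orientation/spin-structure bookkeeping as the key remaining check, which the paper leaves implicit.
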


For instance, the constant bigons contribute
$$ m_2 (uU,vV) = vu[\pt],\qquad m_2(vV,uU) = -uv [\pt],$$
and hence they cancel with each other in \eqref{eqn:mcfirst}. 


\subsection{Gluing deformation spaces}

By analyzing cocycle conditions contributed from holomorphic strips bounded between two of $(\BL_1,\nabla^{x,y})$, $(\BL_2,\nabla^{x',y'})$ and $(\BL, uU+vV)$, we have the following.

\begin{prop}\label{prop:fingl} There are isomorphisms among $(\BL_1,\nabla^{x,y})$, $(\BL_2,\nabla^{x',y'})$ and $(\BL, uU+vV)$ given as follows.
\begin{itemize}
\item $(\BL_1,\nabla^{x,y}) \cong (\BL,uU+vV)$ if and only if
$$x= uv-1,\quad y^{-1}= u  $$ 
where $u,y \in \Lambda_0^\times$, $v \in \Lambda_+$, $x \in -1 + \Lambda_+$.
\item $(\BL_2,\nabla^{x',y'}) \cong (\BL,uU+vV)$ if and only if
$$x'= uv-1,\quad y'=v  $$ 
where $v,y' \in \Lambda_0^\times$, $u \in \Lambda_+$, $x' \in -1 + \Lambda_+$.
\item $(\BL_1,\nabla^{x,y})\cong (\BL_2,\nabla^{x',y'})$ if and only if
$$x= x',\quad y' = y (x+1) $$
where $y,y' \in \Lambda_0^\times$, $x,x' \in k + \Lambda_+$ for $k \in \C^\times - \{-1\}$.
\end{itemize}
\end{prop}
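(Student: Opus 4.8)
The plan is to obtain all three isomorphisms by the same mechanism used in Section~\ref{sec:app}: set up the Floer complexes between the relevant pairs of Lagrangians in the Morse/pearl model, write down a candidate isomorphism as a specific linear combination of intersection points, impose the cocycle condition $m_1^{\mathbf{b},\mathbf{b}'}(\alpha)=0$, and read off the constraints on the deformation parameters. Then check that the candidate inverse $\beta$ satisfies $m_2(\alpha,\beta)=\one$ and $m_2(\beta,\alpha)=\one$ up to $m_1$-exact terms, which upgrades the cocycle to an honest (quasi-)isomorphism. I would organize the work by first disposing of the easy torus-torus case and then doing the two torus-sphere cases, which are the novel part.

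First I would handle $(\BL_1,\nabla^{x,y})\cong(\BL_2,\nabla^{x',y'})$. After deforming the Chekanov and Clifford fibers so that they intersect transversally (as emphasized in the text), this is the gluing already computed in \cite[Prop.~11.8]{Seidel-lect} and generalized by \cite{PT}; the only thing to verify is that our gauge-hypertorus conventions (Figure~\ref{fig:gluing}) reproduce the stated formulas $x=x'$, $y'=y(x+1)$ with the claimed parameter ranges $y,y'\in\Lambda_0^\times$, $x,x'\in k+\Lambda_+$ for $k\in\C^\times\setminus\{-1\}$. The condition $k\neq -1$ is exactly the requirement that $x+1=ho\ell$ is a unit, i.e.\ $\mathrm{val}(x+1)=0$, so that the gauge transformation is invertible; this should fall out of the holonomy bookkeeping once the two gauge cycles are homotoped into each other across the relevant disc.

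Next, the two torus-sphere gluings. Here I would work with the local model $\C^2\setminus\{ab=\epsilon\}$ and compute $CF((\BL,uU+vV),(\BL_i,\nabla))$ in the Morse model, using pearl trajectories with the immersed corners $U,V$ of $\BL$. The Floer differential $m_1^{\mathbf{b},\mathbf{b}'}$ on this complex receives contributions from the constant polygons at the immersed point (weighted by monomials in $u,v$) and from the genuine holomorphic strips bounded between $\BL$ and $\BL_i$ (weighted by $T^{\text{area}}$ and by the holonomy factors $x,y$ or $x',y'$). Expanding the cocycle condition on the candidate isomorphism $\alpha$ (a combination of the two intersection points of $\BL$ with $\BL_i$), one gets polynomial relations whose leading terms force $x=uv-1$ — the $-1$ being the constant-disc / curvature correction, which is why $x\in -1+\Lambda_+$ — and $y^{-1}=u$ (resp.\ $y'=v$), the holonomy of $\BL_i$ matching the immersed-sector parameter of $\BL$. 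The parameter ranges ($u,y\in\Lambda_0^\times$, $v\in\Lambda_+$, etc.) are then dictated by the overlap of: (i) the region where $\nabla^{x,y}$ is a legitimate $\Lambda_0^\times$-connection; (ii) the region where $b=uU+vV$ is a valid (pseudo-)deformation with convergent $m_k$; and (iii) the Lemmas just proved giving $(u,v)\in\Lambda_0\times\Lambda_+\cup\Lambda_+\times\Lambda_0$ and unobstructedness. Finally I would exhibit $\beta$ (the ``reverse'' combination of the same intersection points) and verify $m_2(\alpha,\beta)\equiv\one$, $m_2(\beta,\alpha)\equiv\one$ by the analogous strip count.

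The main obstacle I anticipate is the bookkeeping for the torus-sphere strips: one must correctly enumerate the holomorphic polygons with boundary on $\BL\cup\BL_i$ that carry immersed corners at the single self-intersection point of $\BL$, assign the right signs (which depend on the spin structures and the corner data), and track how the $T^{\text{area}}$ weights combine with $u,v$ to produce the clean relations $x=uv-1$, $y^{-1}=u$. In particular one has to confirm that all higher $m_k$ contributions either vanish or are absorbed into the stated $\Lambda_+$-corrections, so that the isomorphism is exact rather than merely leading-order. The asymmetry between the two cases ($y^{-1}=u$ for $\BL_1$ versus $y'=v$ for $\BL_2$) is geometric — it reflects which branch of $\BL$ meets which torus, i.e.\ the fact that $\BL_1$ sits on the Chekanov side $r_1<\epsilon$ and $\BL_2$ on the Clifford side $r_2>\epsilon$ of the wall — and verifying this matches the orientations of the gauge hypertori in Figure~\ref{fig:gluing} is the delicate point. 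Composability of the three isomorphisms around the triangle (consistency of $x=uv-1$ from both sides with $x=x'$ and $y'=y(x+1)$) provides a useful internal check: substituting $y^{-1}=u$, $y'=v$ gives $y'=y^{-1}\cdot(uv)$, and since $x+1=uv$ this is precisely $y'=y(x+1)$, confirming the cocycle condition on the overlap.
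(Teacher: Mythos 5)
Your overall strategy — impose the cocycle condition $m_1(\alpha)=0$ on a candidate morphism built from the intersection points, read off the coordinate change, then verify invertibility via $m_2$ — is exactly the paper's, and your treatment of the torus--torus case (cite Seidel/Pascaleff--Tonkonog, match gauge conventions) and of the relations $y^{-1}=u$, $y'=v$ (from the two strips contributing $T^{\Delta}(1-uy)$ to the $\beta_1$-coefficient) agrees with what the paper does. The one place where your argument as written does not close is the claim that ``the leading terms force $x=uv-1$.'' The direct pearl-trajectory count only gives $\langle d(\alpha_0^{\BL_1,\BL}),\alpha_1^{\BL_1,\BL}\rangle = x\,h(uv)+g(uv)$, where $g,h$ are \emph{a priori undetermined} power series in $uv$ with leading term $\pm 1$, coming from the strips decorated with arbitrarily many constant polygons with corners $U,V,\ldots$ at the immersed point. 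The leading order therefore only yields $x\in -1+\Lambda_+$, i.e.\ $x=-H(uv)$ with $H=g/h=1+O(uv)$; pinning down $H$ exactly is the whole difficulty, and you correctly flag it as your main obstacle but offer no mechanism to resolve it short of a complete enumeration of all higher pearl configurations.

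The paper avoids that enumeration entirely: it turns your final ``internal check'' into the actual derivation. Composing the two isomorphisms $(\BL_1,(x,y))\to(\BL,uU+vV)\to(\BL_2,(x',y'))$ gives $m_2(\alpha_0^{\BL_1,\BL},\alpha_0^{\BL,\BL_2})=T^{\Delta'}\alpha_0^{\BL_1,\BL_2}$, so the induced coordinate change must agree with the already-known torus--torus gluing $x'=x$, $y'=y(x+1)$ on the triple overlap; substituting $y^{-1}=u$ and $y'=v$ forces $x+1=y'/y=uv$, i.e.\ $H(uv)=1-uv$ exactly, to all orders. You have precisely this algebra in your last paragraph but run it in the wrong logical direction (as a consistency check assuming $x=uv-1$ already proven). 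Reversing it both fills the gap and spares you the sign/enumeration bookkeeping for the higher constant-polygon corrections that you identified as delicate.
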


Now we glue the three deformation spaces accordingly.  Namely we take the disjoint union of $(\Lambda_0 \times \Lambda_+) \cup (\Lambda_+ \times \Lambda_0)$ (with coordinates $(u,v)$) and two copies of $\Lambda_0^\times \times \Lambda_0^\times$ (with coordinates $(x,y)$ and $(x',y')$ respectively), and take the quotient according to the above relations. 

To better understand the topology, we should consider pseudo-deformations and pseudo-isomorphisms.  Namely, we take the disjoint union of $\Lambda \times \Lambda \supset (\Lambda_0 \times \Lambda_+) \cup (\Lambda_+ \times \Lambda_0)$ and two copies of $\Lambda^\times \times \Lambda^\times \supset \Lambda_0^\times \times \Lambda_0^\times$, and take the quotient according to the same relations.  The above moduli space is a subset of this.  The advantage of extending by pseudo-deformations is that the gluing regions become open: they are $\Lambda^\times \times \Lambda^\times$ (in coordinates $(x,y)$ or $(x',y')$) between $\BL_i$ and $\BL$, and $(\Lambda^\times - \{-1\}) \times \Lambda^\times$ between $\BL_1$ and $\BL_2$.  We have the quotient topology on the bigger space which restricts to give a topology on the moduli space.

The gluing is illustrated in Figure \ref{fig:glcharts}.  The upper component of the degenerate cone $\{uv=0\}$ is a (partial) compactification of $\{x=-1\} \cong \C^\times$ by adding the point $\{x=-1, y=0\}$, and similar for the lower component.  The resulting moduli space is
$$\{(u,v) \in \Lambda_0 \times \Lambda_0: uv \not\in 1 + \Lambda_+\}.$$

\begin{figure}[h]
\begin{center}
\includegraphics[scale=0.5]{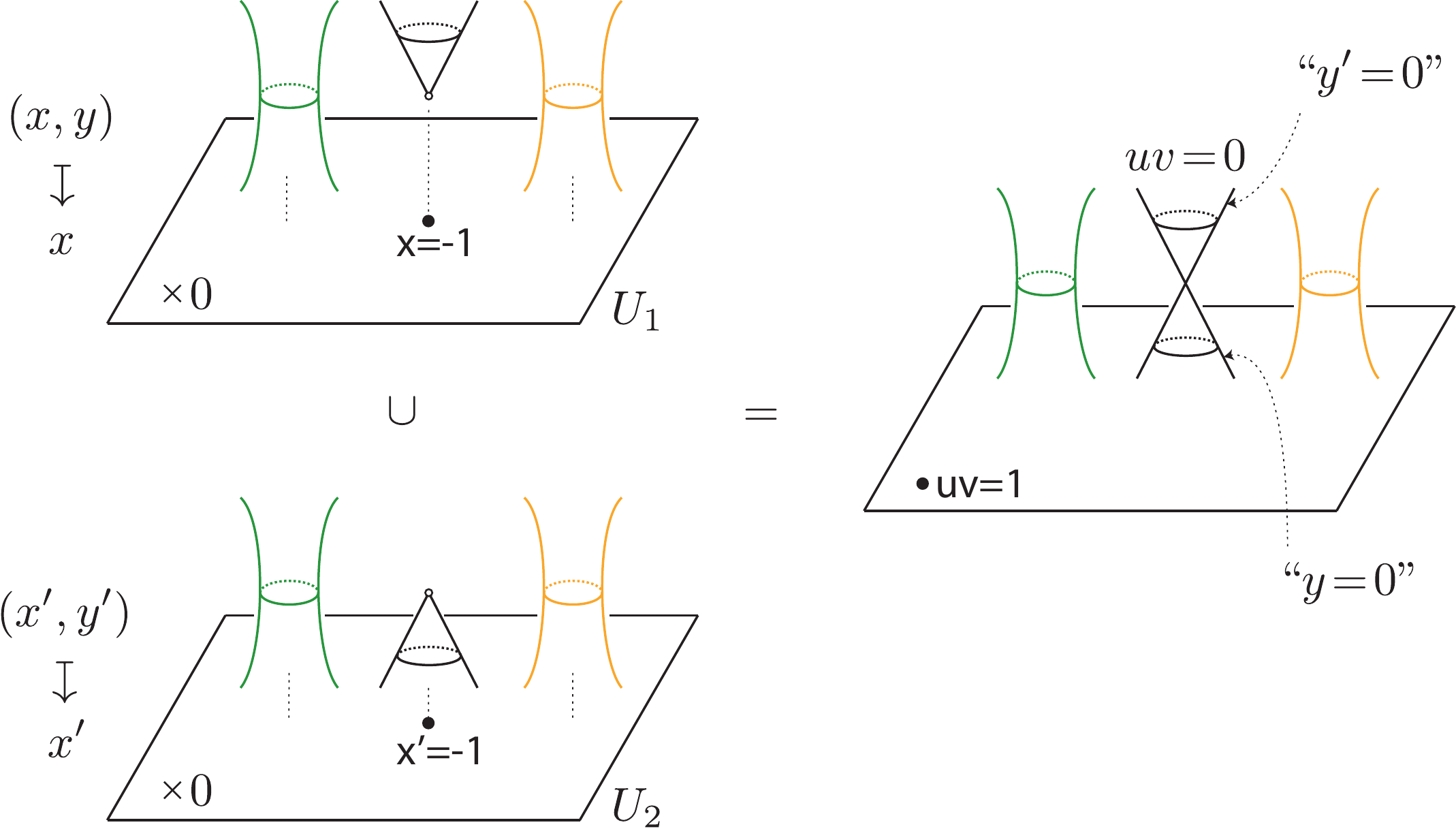}
\caption{Gluing of two charts}
\label{fig:glcharts}
\end{center}
\end{figure}

In the following we sketch the proof of Proposition \ref{prop:fingl} by counting holomorphic strips that contribute to cocycle conditions.

\subsection{Cocycle conditions and isomorphisms}

The relation between $\BL_1$ and $\BL_2$ has been studied in Seidel's lecture notes \cite{Seidel-lect} and also used by the work of Pascaleff-Tonkonog \cite{PT}.  Here we focus on the relation between $\BL$ and $\BL_1$ (and that between $\BL$ and $\BL_2$ is similar).

$\BL_1$ and $\BL$ intersect along two disjoint circles.  See Figure \ref{fig:dalpha}.  We perturb $\BL_1$ by a Hamiltonion so that they intersect transversely at four points, which are denoted by
$$\alpha_0^{\BL_1,\BL}, \alpha_1^{\BL_1,\BL},\beta_1^{\BL_1,\BL},\beta_2^{\BL_1,\BL}.$$
The subscripts denote degrees of the morphisms.

We compute $d=m_1^{\nabla^{x,y},b}$ of the unique degree-zero generator $\alpha_0^{\BL_1,\BL}$ in the Floer complex $CF((\BL_1,\nabla^{x,y}),(\BL,b=uU+vV))$
by counting pearl trajectories.  There are two strips from $\alpha_0^{\BL_1,\BL}$ to $\beta_1^{\BL_1,\BL}$ whose projections under $f$ are the shaded regions in (a) of Figure \ref{fig:dalpha}.  We have chosen $\BL_1$ such that the two strips have the same symplectic area $\Delta$.  We have
\begin{equation}\label{eqn:l1im}
\langle d(\alpha_0^{\BL_1,\BL}),\beta_1^{\BL_1,\BL} \rangle = T^{\Delta}\left( 1-  u y \right).
\end{equation}
The strip on the right side gives $T^\Delta$, while the strip on the left side gives $T^\Delta uy$ because it passes through the immersed point of $\BL$ and the $y$-hypertorus.

\begin{figure}[h]
\begin{center}
\includegraphics[scale=0.4]{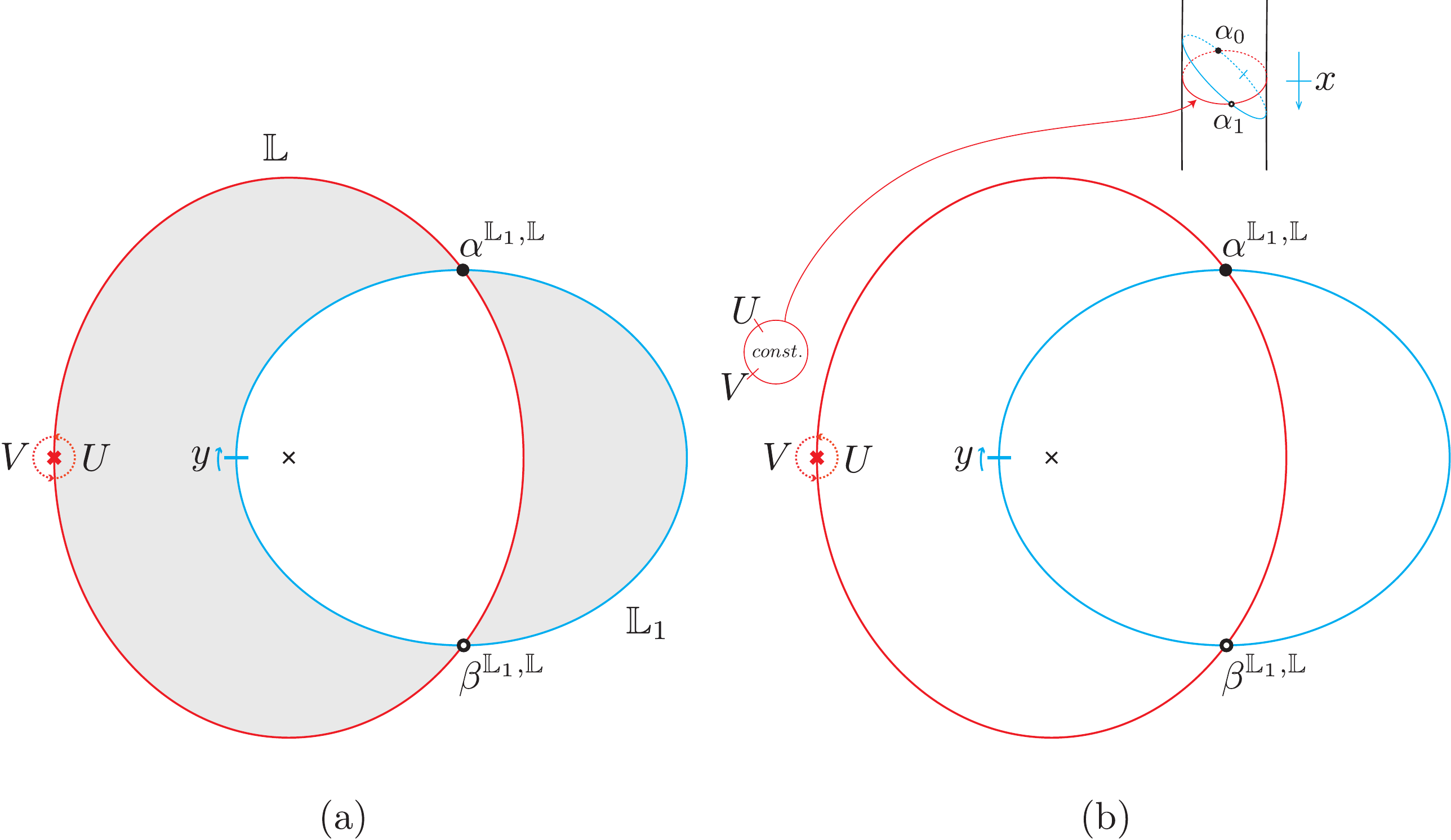}
\caption{Pearl trajectories contributing to $d (\alpha^{\BL_1,\BL}  )$}
\label{fig:dalpha}
\end{center}
\end{figure}

For $\left\langle d(\alpha_0^{\BL_1,\BL}), \alpha_1^{\BL_1,\BL} \right\rangle$, we also have a pair of strips in $f^{-1} (\alpha^{\BL_1,\BL})$ contributing to it.  One of the strips gives $\pm 1$, and the other gives $\pm x$ since it passes through the $x$-hypertorus.   Additionally we have pearl trajectories consisting of a strip, Morse flow lines in (the normalization of) $\bL$, and constant polygons with corners $U,V,\ldots,U,V$.  The pearl trajectories form power series in $uv$.  Hence
$$\left\langle d(\alpha_0^{\BL_1,\BL}), \alpha_1^{\BL_1,\BL} \right\rangle = x h(uv) + g(uv)$$
where $g,h$ are power series with leading term $\pm 1$.  In summary
$$ d(\alpha_0^{\BL_1,\BL}) = T^{\Delta}\left( 1- uy \right)\beta_1^{\BL_1,\BL} + (x h(uv) + g(uv)) \alpha_1^{\BL_1,\BL}.$$
The cocycle condition gives
\begin{equation}\label{eqn:coord1im}
\left\{
\begin{array}{l}
x + H(uv) = 0\\
y^{-1} = u
\end{array}\right.
\end{equation}
where $H=g/h$.
One can check that the degree zero morphism in $CF(\BL,\BL_1)$ (which is the one dual to $\beta_2^{\BL_1,\BL}$) is also $d$-closed if and only if the same condition is satisfied. Moreover, $\alpha_0^{\BL_1,\BL}$ gives an isomorphism under the condition \ref{eqn:coord1im}.

$H(uv)$ can be derived from the relations between $\BL,\BL_1,\BL_2$.

\begin{lemma} 
We have $H(uv) = 1-uv$ and hence $x = uv-1$. 
\end{lemma}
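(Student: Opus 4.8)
The plan is to pin down the power series $H$ by playing the three isomorphisms in Proposition~\ref{prop:fingl} against each other, i.e.\ by imposing the cocycle condition coming from composing the pseudo-isomorphism $(\BL_1,\nabla^{x,y}) \cong (\BL, uU+vV)$ with $(\BL,uU+vV) \cong (\BL_2,\nabla^{x',y'})$ and comparing with the directly computed isomorphism $(\BL_1,\nabla^{x,y}) \cong (\BL_2,\nabla^{x',y'})$. From the last bullet of Proposition~\ref{prop:fingl} the Chekanov--Clifford gluing is $x=x'$ and $y' = y(x+1)$. From the first two bullets we have $x = H(uv)$ corrected to $x = -H(uv)$ from \eqref{eqn:coord1im}, together with the analogous relation $x' + \tilde H(uv) = 0$ on the $\BL_2$ side, and the holonomy matchings $y^{-1} = u$, $y' = v$. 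The key point is that \emph{all} of these must be simultaneously consistent on the triple overlap, so substituting $y^{-1}=u$, $y'=v$ into $y' = y(x+1)$ gives $v = u^{-1}(x+1)$, i.e. $x + 1 = uv$, hence $x = uv - 1$ and therefore $H(uv) = 1 - uv$.

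First I would set up the triple-overlap region precisely: it is the locus where $(u,v)$, $(x,y)$ and $(x',y')$ all lie in the open gluing charts described before Figure~\ref{fig:glcharts}, namely $\Lambda^\times \times \Lambda^\times$ in $(x,y)$ and in $(x',y')$ and $(\Lambda^\times - \{-1\}) \times \Lambda^\times$ for the Chekanov--Clifford overlap; one checks this is nonempty (e.g.\ $uv \in \C^\times - \{1\}$). Next I would invoke the cocycle/associativity property: a chain of pseudo-isomorphisms $(\BL_1,\nabla^{x,y}) \to (\BL,uU+vV) \to (\BL_2,\nabla^{x',y'})$ composes (via $m_2$ of the two connecting morphisms, where everything converges on this overlap) to a pseudo-isomorphism $(\BL_1,\nabla^{x,y}) \to (\BL_2,\nabla^{x',y'})$, which by uniqueness of the gluing map must agree with the one already recorded in Proposition~\ref{prop:fingl}. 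Then I would simply equate the two expressions for the gluing map, eliminate the holonomy variables using $y^{-1}=u$ and $y'=v$, and read off $x+1 = uv$. Finally, comparing with \eqref{eqn:coord1im} forces $H(uv) = 1-uv$, and since $H = g/h$ with $g,h$ power series of leading term $\pm 1$ this is consistent (indeed it determines the ratio $g/h$ uniquely as a power series in the single variable $uv$).

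The main obstacle I anticipate is not the algebra but justifying that the composition of the two pseudo-isomorphisms really is a legitimate pseudo-isomorphism and coincides with the independently constructed Chekanov--Clifford gluing map; a priori, as emphasized in the remark after Definition~\ref{def:piso}, composites of pseudo-isomorphisms need not converge because the $m_k$ involving all three Lagrangians simultaneously can fail Novikov convergence. So I would need to verify that on the triple-overlap region (where $u,v$ are suitably positive and $x,x'$ near a constant) the relevant $m_k^{\mathbf b_0,\ldots,\mathbf b_k}$ with the $\mathbf b_i$ drawn from $\{\nabla^{x,y}, uU+vV, \nabla^{x',y'}\}$ do converge, so that associativity of $m_2$ up to homotopy (the $A_\infty$-relations) applies and the triangle of gluing maps genuinely commutes. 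Alternatively, if that convergence is delicate, one can bypass composition entirely: directly count the pearl trajectories contributing to $\langle d(\alpha_0^{\BL_1,\BL}),\alpha_1^{\BL_1,\BL}\rangle$ — i.e.\ the strip-plus-Morse-flow-plus-constant-polygon configurations producing the series $g(uv)$ and $h(uv)$ — and identify them geometrically with the strips computed in the $\BL_1$--$\BL_2$ case, but the consistency argument above is cleaner and is the route I would write up.
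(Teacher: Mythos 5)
Your proposal is correct and follows essentially the same route as the paper: compose the two isomorphisms $(\BL_1,\nabla^{x,y})\to(\BL,uU+vV)\to(\BL_2,\nabla^{x',y'})$, observe that the composite must agree with the directly computed Chekanov--Clifford gluing $x'=x$, $y'=y(x+1)$ on the triple overlap, and substitute $y^{-1}=u$, $y'=v$ to get $1-H(uv)=x+1=uv$. The convergence worry you flag is handled in the paper by explicitly computing the composition $m_2(\alpha_0^{\BL_1,\BL},\alpha_0^{\BL,\BL_2})=T^{\Delta'}\alpha_0^{\BL_1,\BL_2}$ as a single triangle contribution, which is a detail within the same argument.
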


\begin{proof} 
We have chains of isomorphisms
$$(\BL_1,(x,y)) \stackrel{\alpha_0^{\BL_1,\BL}}{\longrightarrow} (\BL, uU+ vV) \stackrel{\alpha_0^{\BL,\BL_2}}{\longrightarrow} (\BL_2,(x',y')).$$
The composition of these two isomorphisms is given by
$$m_2( \alpha_0^{\BL_1,\BL}, \alpha_0^{\BL,\BL_2}) = T^{\Delta'} \alpha_0^{\BL_1,\BL_2} $$
where $\Delta'$ is the symplectic area of the triangle projecting down to the shaded region in Figure \ref{fig:gluing}. This implies that $\alpha_0^{\BL_1,\BL_2}$ is also an isomorphism. Therefore, coordinate changes among three objects must be compatible (over the triple interesction). Using
$$x'=x,\quad y' = y (x+1)$$
we get
$$ 1-H(uv) = x+1 =  \frac{y'}{y} = \frac{v}{u^{-1}} = uv.$$
\end{proof}

In conclusion, the moduli space of smooth Lagrangian tori has a puncture $u=v=0$.  We can glue in the deformation space of the immersed sphere to partial compactify and fill in this puncture.   It also applies to generic singular fibers of a Lagrangian fibration in higher dimensions by taking a direct product with a torus factor.  

In general situations, the deformation space of a singular Lagrangian is noncommutative, and our gluing technique will produce a noncommutative space.  It is already manifested in the one-dimensional example in Figure \ref{fig:flop}.  This is very interesting and we will further investigate it in future works.



\bibliographystyle{amsalpha}
\bibliography{geometry}

%
%
%
%
%
%
%

\end{document}